\newtheorem{theorem}{Theorem}
\newtheorem{lemma}[theorem]{Lemma}
\newtheorem{prop}[theorem]{Proposition}
\newtheorem{corollary}[theorem]{Corollary}
\theoremstyle{definition}
\theoremstyle{remark}
\newtheorem{remark}[theorem]{Remark}
\newcommand{\BB}{{\mathbb B}}
\newcommand{\DD}{{\mathbb D}}
\newcommand{\OO}{{\mathcal O}}
\newcommand{\EE}{{\mathbb E}}
\newcommand{\CC}{{\mathbb C}}
\newcommand{\TT}{{\mathbb T}}
\DeclareMathOperator{\Aut}{Aut} \DeclareMathOperator{\id}{id}
\renewcommand{\phi}{\varphi}
\subjclass[2000]{32F45, 30E05, 93B50}
\begin{document}

\title[Extremal holomorphic maps]{Extremal holomorphic maps in special classes of domains}

\address{Institute of Mathematics, Faculty of Mathematics and Computer Science, Jagiellonian
University,  \L ojasiewicza 6, 30-348 Krak\'ow, Poland\\
\indent D\'epartement de math\'ematiques et de statistique, Pavillon Alexandre�Vachon, 1045 av. de la M\'edecine, Universit\'e Laval, Qu\'ebec (Qu\'ebec), Canada, G1V 0A6.}
\author{\L ukasz Kosi\'nski}\email{lukasz.kosinski@im.uj.edu.pl}

\address{Institute of Mathematics, Faculty of Mathematics and Computer Science, Jagiellonian
University,  \L ojasiewicza 6, 30-348 Krak\'ow, Poland}
\author{W\l odzimierz Zwonek}\email{wlodzimierz.zwonek@im.uj.edu.pl}
\thanks{The first author is partially supported by the Polish Ministry of Science and Higher Education grant Iuventus Plus IP2012 032372.\\
The second author is partially supported by the grant of the Polish National Science Centre no. UMO--2011/03/B/ST1/04758.}
\keywords{(weak) $m$-extremal, $m$-complex geodesic, classical Cartan domain, symmetrised bidisc, tetrablock}

\maketitle

\begin{abstract}
In the paper we discuss three different notions of extremal holomorphic mappings: weak $m$-extremals, $m$-extremals and $m$-complex geodesics.
We discuss relations between them in general case and in special cases of the unit ball, classical Cartan domains, symmetrised bidisc and tetrablock.
In particular, we show that weak $3$-extremal maps in the symmetrised bidisc are rational thus giving the (partial) answer to a problem posed in a recent paper by J. Agler, Z. Lykova and N. J. Young  (\cite{Agl-Lyk-You 2013}).
\end{abstract}

\section{Introduction}

Throughout the paper $\mathbb D$ will always denote the unit disc in the complex plane.

Let $D$ be a domain in $\mathbb C^n$. Let $m\geq 2$ and let $\lambda_1,\ldots,\lambda_m\in \mathbb D$ be distinct (\textit{distinct } means in the paper
pairwise distinct) and $z_1,\ldots,z_m\in D$. Following \cite{Agl-Lyk-You 2013}
we say that the interpolation data
\begin{equation}
 \lambda_j\mapsto z_j,\ \mathbb D\to D,\quad j=1,\ldots,m,
\end{equation}
are {\it extremally solvable} if there is a map $h\in\OO(\mathbb D,D)$ such that $h(\lambda_j)=z_j$, $j=1,\ldots,m$,
and there is no $f\in\OO(\overline{\mathbb D},D)$ (i.e. $f$ is holomorphic on some neighborhood of $\overline{\mathbb D}$ and its image lies in $D$)
such that $f(\lambda_j)=z_j$, $j=1,\ldots,m$.

We say that $h\in\OO(\mathbb D,D)$ is {\it $m$-extremal} if for all choices of $m$ distinct points $\lambda_1,\ldots,\lambda_m\in \mathbb D$
the interpolation data
\begin{equation}
 \lambda_j\mapsto h(\lambda_j),\ \mathbb D\to D,\quad  j=1,\ldots,m,
\end{equation}
are extremally solvable. Note that if $h$ is $m$-extremal then it is $(m+1)$-extremal.

Generally, the fact that for a fixed $m$ the interpolation data are extremally solvable for some $\lambda_1,\ldots,\lambda_m$
does not imply that the interpolation data are extremally
solvable for all other $m$ points $\mu_1,\ldots,\mu_m$. This is already not the case generally for $m=2$. In other words extremals with respect to the Lempert function for some pair of points need not be extremal
for the Lempert function for any pair of points (see Section~\ref{Ident} for a definition of the Lempert function). In particular, there are domains, e.g. the annulus in the complex plane, possessing no $2$-extremals. For basic properties of the Lempert function (and other holomorphically invariant functions) that we shall use we refer the Reader to \cite{Jar-Pfl 1993}.

Therefore, it is natural to introduce a weaker notion of $m$-extremal map which is equivalent with the notion of extremal in the sense of Lempert when $m=2$. This may be done as follows: if an analytic disc $f:\mathbb D\to D$ and fixed points $\lambda_1,\ldots,\lambda_m\in \mathbb D$ are such that the problem $\lambda_j\mapsto f(\lambda_j)$ is extremally solvable, then we shall say that $f$ is a {\it weak $m$-extremal with respect to $\lambda_1,\ldots,\lambda_m$}. Naturally, $f$ will be said to be a {\it weak $m$-extremal} if it is a weak extremal with respect to some $m$ distinct points in the unit disc.

Of course $m$-extremals are weak $m$-extremals for any system of $m$ distinct points $\lambda_1,\ldots,\lambda_m\in\mathbb D$.
In general, the class of weak $m$-extremals is strictly bigger than the class of $m$-extremals (as already mentioned even if $m=2$ with $D$ being for instance the annulus). Similar problems concerning some kind of $m$ extremality in several variable context were considered for instance in \cite{Ama-Tho1994}, \cite{Edi1995}.

Certainly these two notions coincide in the case of domains for which the assertion of the Lempert theorem holds. Recall this theorem in the form it would be convenient for us (see \cite{Lem 1981} and \cite{Lem 1982}).

\begin{theorem}\label{thm:lempert} Let $D$ be a bounded convex or smooth strongly linearly convex domain in $\mathbb C^n$. Then for any $w,z\in D$, $w\neq z$ there are a holomorphic mapping $f:\mathbb D\to D$ such that
 $w$ and $z$ lie in the image of $f$ and a holomorphic function $F:D\to\mathbb D$ such that $F\circ f$ is the identity $\operatorname{id}_{\mathbb D}$.
\end{theorem}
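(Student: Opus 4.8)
The statement is Lempert's theorem, and the plan is to reproduce the architecture of Lempert's continuity-method proof; I only outline the steps.

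\emph{Step 1: reduction to smooth strongly convex domains.} A bounded convex domain $D$ can be exhausted by an increasing sequence $D=\bigcup_\nu D_\nu$ of smooth strongly convex domains. Granting the conclusion for each $D_\nu$ --- applied to $w,z$, which lie in $D_\nu$ once $\nu$ is large --- I would normalize the parametrizations so that $f_\nu(0)=w$, $f_\nu(s_\nu)=z$ with $s_\nu\in(0,1)$ and $\tanh^{-1}s_\nu$ the Lempert distance of $w,z$ in $D_\nu$; since $D_\nu\subset D$, this distance is at least the Lempert distance in $D$, so $s_\nu$ stays bounded away from $0$. A normal families argument (with a diagonal procedure over an exhaustion of $D$ for the left inverses $F_\nu$) produces limits $f\colon\mathbb D\to\overline D$ and $F\colon D\to\overline{\mathbb D}$ with $F\circ f=\operatorname{id}_{\mathbb D}$ on the relevant set and $f$ nonconstant; the maximum principle --- for $F$ directly, and for $\rho\circ f$ with $\rho$ a convex defining function --- then upgrades this to $F(D)\subset\mathbb D$ and $f(\mathbb D)\subset D$, as wanted. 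So it suffices to treat $D$ smooth and strongly convex; the more general smooth strongly linearly convex case follows the same scheme, with the global supporting-hyperplane inequality used below replaced by its boundary-local counterpart.

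\emph{Step 2: from a stationary disc to the left inverse.} Fix a smooth defining function $\rho$ of $D$ and, following Lempert, call a nonconstant $f\in\mathcal O(\overline{\mathbb D},\mathbb C^n)$, smooth up to $\partial\mathbb D$ with $f(\overline{\mathbb D})\subset\overline D$ and $f(\partial\mathbb D)\subset\partial D$, a \emph{stationary disc} if there is a nowhere-vanishing $\widetilde f\in\mathcal O(\overline{\mathbb D},\mathbb C^n)$ with $\widetilde f(\zeta)=\zeta\,\rho'(f(\zeta))$ for $\zeta\in\partial\mathbb D$, $\rho'$ the complex gradient of $\rho$. I claim every stationary disc carries a holomorphic left inverse. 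Indeed, for $z\in D$ and $\zeta\in\partial\mathbb D$ the supporting hyperplane of $D$ at $f(\zeta)$ gives $\operatorname{Re}\langle z-f(\zeta),\rho'(f(\zeta))\rangle<0$ with the bilinear pairing $\langle a,b\rangle=\sum_j a_jb_j$, which reads $\operatorname{Re}\big(\overline\zeta\,h_z(\zeta)\big)<0$ on $\partial\mathbb D$ for the holomorphic function $h_z(\zeta)=\langle z-f(\zeta),\widetilde f(\zeta)\rangle$. A winding-number count (the loop $\overline\zeta\,h_z(\zeta)$ stays in a half-plane, so has index $0$) then shows $h_z$ has exactly one zero $F(z)$ in $\mathbb D$; $z\mapsto F(z)$ is holomorphic (residue formula plus holomorphic dependence of $h_z$ on $z$), $F(D)\subset\mathbb D$, and $F\circ f=\operatorname{id}_{\mathbb D}$ because $h_{f(\lambda)}$ vanishes at $\lambda$. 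In particular $f$ is extremal for both the Lempert and the Carath\'eodory functions, hence a complex geodesic. So this step is comparatively soft once a stationary disc through $w,z$ is in hand.

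\emph{Step 3: existence of stationary discs, by continuity --- the crux.} It remains to produce a stationary disc through two prescribed points. I would encode the pair $(f,\widetilde f)$, together with the interpolation conditions $f(0)=w$ and $f(\lambda_0)=z$, as the zero set of a nonlinear Riemann--Hilbert operator, and deform $D$ to the unit ball through a smooth family $(D_t)_{t\in[0,1]}$ of smooth strongly convex domains, $D_1=D$, $D_0=\mathbb B$ (possible since strong convexity survives a suitable interpolation of defining functions), for which the stationary discs and their left inverses are given by explicit linear-fractional formulas; a solution thus exists at $t=0$. One then shows the set of $t$ carrying a solution is open and closed in $[0,1]$. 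Openness is an implicit-function-theorem statement: the linearization of the Riemann--Hilbert operator at a stationary disc is Fredholm, and strong convexity forces the partial indices of the associated loop in $GL_n(\mathbb C)$ all to equal $1$, which makes the linearized operator surjective with solution space of exactly the dimension needed to absorb the two point constraints. Closedness rests on a priori estimates: elliptic (Schauder-type) estimates for the Riemann--Hilbert problem bound solutions uniformly in $C^k(\overline{\mathbb D})$, while convexity prevents degeneration in the limit (nonvanishing of $\widetilde f$ and transversality of $f(\partial\mathbb D)$ to $\partial D$ persist). The smoothness of $f$ up to $\partial\mathbb D$ needed even to state the problem is itself a bootstrap on the nonlinear boundary equation. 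I expect the partial-index computation and these a priori estimates to be the genuine obstacle; the reductions in Steps 1 and 2 are soft by comparison.
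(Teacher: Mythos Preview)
The paper does not prove this theorem at all: it is stated as a known result and attributed to Lempert with citations to \cite{Lem 1981} and \cite{Lem 1982} (``Recall this theorem in the form it would be convenient for us''). So there is no ``paper's own proof'' to compare against; the theorem functions purely as background.

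Your outline is a reasonable sketch of Lempert's original architecture --- exhaustion to reduce to the smooth strongly convex case, the stationary-disc formalism with its left inverse via the winding-number argument, and the continuity method with a Riemann--Hilbert linearization --- and the places you flag as hard (partial indices, a priori estimates) are indeed where the work lies. But for the purposes of this paper no proof is expected: the result is simply quoted from the literature, so supplying one, even in outline, goes well beyond what the paper does.
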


In fact the function $f$ in the above result is extremal for the Lempert function of $w$ and $z$ whereas $F$ is extremal for the Carath\'eodory distance of $w$ and $z$. Let us call the function $F$ \textit{a left inverse of $f$}.
Recall also that the Lempert theorem holds for the symmetrised bidisc and the tetrablock (see \cite{Agl-You 2004}, \cite{Cos 2004b} and \cite{Edi-Kos-Zwo}).

Note also that (weak) $m$-extremals (for $\lambda_1,\ldots,\lambda_m$) are (weak) $(m+1)$-extremals (for $\lambda_1,\ldots, \lambda_{m+1}$ with arbitrarily chosen $\lambda_{m+1}\in\mathbb D$ distinct from $\lambda_1,\ldots,\lambda_m$).

The existence of left inverses in the Lempert Theorem suggests another notion of extremal mappings.
Namely, we generalize the notion of a complex geodesic (see e.g. \cite[Section~8.2]{Jar-Pfl 1993}) as follows.

Let $f:\DD\to D$ be a holomorphic mapping, $m\geq 2$. We say that $f$ is an {\it $m$-complex geodesic} if there is a function $F\in\OO(D,\DD)$
such that $F\circ f$ is a non-constant Blaschke product of degree at most $m-1$.
Note that $2$-complex geodesics are simply complex geodesics and any $m$-complex geodesic is an $(m+1)$-complex geodesic.

The aim of the paper is to try to understand the relations between the notions of $m$-extremals, weak $m$-extremals and $m$-complex geodesics in special classes of domains
(convex ones, classical Cartan domains, the unit ball, symmetrised bidisc and tetrablock).

We also see that in some class of domains (containing for example classical Cartan domains) the notions of weak $m$-extremals and $m$-extremals are equivalent (Proposition~\ref{prop}). Clearly, in the polydisc all $m$-extremals are $m$-complex geodesics. This is not the case for the the Euclidean unit ball. We show that there are $4$-extremals in the unit ball which are not $4$-complex geodesics (Proposition~\ref{prop:four-extremals-are-not-geodesics}).

Finally we present a new method for describing (weak) $m$-extremals in the symmetrised bidisc. In our approach the crucial role is played by the geometry of the tetrablock - the domain that, similarly to the symmetrised bidisc, arises naturally in the control-engineering problems. Then some arguments allow us to reduce the problem to already investigated classical domains. The results giving the description of weak $m$-extremals are given in Theorem~\ref{lem1} (arbitrary $m$) and in Theorems~\ref{thla} and \ref{thlb} ($m=3$).

In particular, we show that all weak 3-extremals in the symmetrised bidisc are rational and map $\mathbb T$ into the Shilov boundary . As a corollary we get that all $3$-extremals in the symmetrised bidisc
are rational of degree at most $4$ (Theorem~\ref{thm:three-extremals-are-rational}) which gives answer to a problem posed in \cite{Agl-Lyk-You 2013} in case $m=3$ (this case was also studied in \cite{Agl-Lyk-You 2013}).

Finally, Proposition~\ref{prop:identity-is-extremal} shows that the identity is $m$-extremal (in a more general sense -- see Section~\ref{Ident}) 
which answers a problem also posed in \cite{Agl-Lyk-You 2013}.

Here is some notation: for $\alpha\in\mathbb D$ let $m_\alpha(\lambda)= \frac{\alpha - \lambda}{1-\bar \alpha \lambda}$, $\lambda\in\mathbb D$, be a Blaschke factor. Moreover, $\mathbb T$ is the unit circle in the complex plane and $\mathbb C^{k\times l}$ stands for the space of $k\times l$ complex matrices. We shall denote by $\Aut(D)$ the group of holomorphic automorphisms of a domain $D$ of $\mathbb C^n$. Moreover, $\partial_s D$ denotes the Shilov boundary with respect to the algebra $\mathcal O(D)\cap \mathcal C(\overline D)$ of a bounded domain $D$ of $\mathbb C^n$.

The authors would like to thank the anonymuous referee for many remarks which improved the presentation of the paper.

\section{Results on (weak) $m$-extremals and $m$-complex geodesics. General case and classical Cartan domains}

We start with some basic properties and relations between different notions of extremal mappings.

\begin{prop}
Let $f:\DD\to D$ be a holomorphic mapping. Assume that $F:D\to\DD$ is such that $F\circ f$ is a Blaschke product of degree $m$. Then
$f$ is $(m+1)$-extremal.
\end{prop}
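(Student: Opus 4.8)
The plan is to show that if $f\colon\DD\to D$ admits $F\in\OO(D,\DD)$ with $F\circ f$ a Blaschke product $B$ of degree $m$, then for any $m+1$ distinct points $\lambda_1,\dots,\lambda_{m+1}\in\DD$ the data $\lambda_j\mapsto f(\lambda_j)$ are extremally solvable. So suppose, for contradiction, that there exists $g\in\OO(\overline{\DD},D)$ with $g(\lambda_j)=f(\lambda_j)$ for $j=1,\dots,m+1$. Composing with $F$, we obtain $F\circ g\in\OO(\overline{\DD},\DD)$ with $(F\circ g)(\lambda_j)=B(\lambda_j)$ for $j=1,\dots,m+1$.

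The key step is then a Schwarz--Pick / Nevanlinna--Pick type argument: $F\circ g$ and $B$ are two holomorphic self-maps of $\DD$, the first extending holomorphically past $\overline{\DD}$ (hence with sup-norm strictly less than $1$ on $\overline{\DD}$ if its image lies in $\DD$), agreeing at $m+1$ points. I would argue that this forces $F\circ g\equiv B$: indeed $B$ has modulus $1$ on $\mathbb T$, so $B$ is an extremal solution (in fact the unique one up to the usual Nevanlinna--Pick rigidity) to the interpolation problem $\lambda_j\mapsto B(\lambda_j)$, $j=1,\dots,m+1$, among self-maps of $\DD$ — a Blaschke product of degree $m$ is determined by its values at $m+1$ points within the class $\OO(\DD,\overline{\DD})$, and any self-map of $\DD$ agreeing with it at these points must coincide with it. But $F\circ g$ maps $\overline{\DD}$ into $\DD$, so $\|F\circ g\|_{\overline{\DD}}<1$, contradicting $F\circ g\equiv B$ since $B$ is unimodular on $\mathbb T$. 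This contradiction shows no such $g$ exists, i.e. $f$ is $(m+1)$-extremal.

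I would fill in the rigidity claim by a standard induction: if two holomorphic self-maps $u,v$ of $\DD$ agree at $k+1$ points and $v$ is a Blaschke product of degree $k$, then $u\equiv v$. This follows by dividing out Blaschke factors at the common zeros of $u-v$ (equivalently iterating the Schwarz--Pick lemma / using the invariant form of the Schwarz lemma for the pseudohyperbolic metric), reducing the degree at each stage; after $k$ such steps one is comparing a self-map of $\DD$ with a single inner factor (a Möbius map) sharing two points with it, whence equality. Alternatively one can invoke directly that a degree-$k$ Blaschke product is the essentially unique extremal for the associated $(k+1)$-point Pick problem.

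The main obstacle I anticipate is being careful about the boundary behavior: one must genuinely use that $g$ is holomorphic on a \emph{neighborhood} of $\overline{\DD}$ with image in the open domain $D$ — so that $F\circ g$ has image in a compact subset of $\DD$ — in order to derive the contradiction with $\|B\|_{\mathbb T}=1$. Without this the two maps could legitimately coincide and there would be no contradiction; indeed the whole point is that extremality fails precisely when a ``good'' interpolant exists, and the argument must locate the obstruction in the modulus on $\mathbb T$. A secondary point worth a line is that $F\circ f=B$ nonconstant guarantees $f$ is itself nonconstant, so the statement is not vacuous.
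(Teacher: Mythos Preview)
Your proposal is correct and follows essentially the same approach as the paper's proof: both argue by contradiction, push the hypothetical interpolant $g\in\OO(\overline{\DD},D)$ forward through $F$, and obtain a contradiction with the $(m+1)$-extremality of the degree-$m$ Blaschke product $B=F\circ f$ in $\DD$. The only difference is that the paper simply cites this last fact as classical (referring to Pick), whereas you sketch a proof of it via the Nevanlinna--Pick rigidity / iterated Schwarz--Pick argument.
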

\begin{proof}[Proof] Recall that it is well-known that the Blaschke product $F\circ f:\mathbb D\to\mathbb D$ is $(m+1)$-extremal (see e.g. \cite{Pick}). Suppose that $f$ is not an $(m+1)$-extremal.
Then there is a holomorphic  mapping $g:\DD\to D$ with $g(\DD)\subset\subset D$ such that for some $m+1$ distinct points
$\lambda_1,\ldots,\lambda_{m+1}$ we have $g(\lambda_j)=f(\lambda_j)$, $j=1,\ldots,m+1$. Then $(F\circ g)(\lambda_j)=
(F\circ f)(\lambda_j)$, $j=1,\ldots,m+1$ and $(F\circ g)(\DD)\subset\subset \DD$ which contradicts the $(m+1)$-extremality of $F\circ f$.
\end{proof}

\begin{corollary} Let $f\in\OO(\DD,D)$, $F\in\OO(D,\DD)$. Assume that the function $B:=F\circ f\in\OO(\DD,\DD)$ is a Blaschke product of degree $m$
and $B_1$ is a Blaschke product of degree $k$. Then the function $\DD\owns\lambda\mapsto f(B_1(\lambda))\in D$ is an $(mk+1)$-extremal.
\end{corollary}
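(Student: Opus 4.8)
The plan is to reduce the corollary directly to the preceding proposition by exhibiting an appropriate holomorphic function on $D$ whose composition with the reparametrised map $f\circ B_1$ is a Blaschke product of the correct degree. Set $g:=f\circ B_1:\DD\to D$; this is holomorphic since $B_1:\DD\to\DD$ is a Blaschke product, hence in particular maps $\DD$ into $\DD$. The natural candidate for the "left inverse" is $F$ itself, since $F\in\OO(D,\DD)$ and $F\circ g=(F\circ f)\circ B_1=B\circ B_1$.

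First I would observe that the composition of two finite Blaschke products is again a finite Blaschke product, with degree equal to the product of the degrees: $B\circ B_1$ is a Blaschke product of degree $mk$. This is a standard fact — one can see it by counting zeros in $\DD$ with multiplicity, or by noting that $B\circ B_1$ is holomorphic on a neighbourhood of $\overline{\DD}$, maps $\TT$ into $\TT$, and that a holomorphic self-map of $\DD$ extending continuously to $\overline\DD$ and sending $\TT$ to $\TT$ must be a finite Blaschke product whose degree equals its winding number on $\TT$, which multiplies under composition. In particular $B\circ B_1$ is non-constant (both $B$ and $B_1$ are non-constant).

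Then I would simply apply the Proposition above to the pair $g=f\circ B_1$ and $F$: since $F\circ g=B\circ B_1$ is a Blaschke product of degree $mk$, the proposition yields that $g$ is $(mk+1)$-extremal, which is exactly the assertion. No further estimates are needed.

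The only point that deserves care — and the one I expect to be the main (minor) obstacle — is the claim that $B\circ B_1$ has degree exactly $mk$ and is a genuine finite Blaschke product rather than merely a proper holomorphic self-map of $\DD$; one should make sure the degree formula is invoked correctly (it fails for infinite Blaschke products, but here both factors are finite, so the composition extends holomorphically across $\TT$ and the winding-number argument applies cleanly). Everything else is a formal substitution into the already-proved Proposition.
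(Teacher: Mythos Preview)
Your proof is correct and is exactly the intended argument: the paper states the corollary without proof because it is an immediate application of the preceding proposition to the pair $g=f\circ B_1$ and $F$, using that $F\circ g=B\circ B_1$ is a Blaschke product of degree $mk$. The degree-multiplication fact you justify is the only point worth mentioning, and your winding-number / zero-counting argument is fine.
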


The three notions introduced in the preliminary section have clear relations: an $m$-complex geodesic is an $m$-extremal and an $m$-extremal is a weak $m$-extremal (for any system of $m$ pairwise points). Recall that a weak $m$-extremal need not be an $m$-extremal. Already in the case $m=2$ the example of the holomorphic covering of the annulus is a weak $2$-extremal for (some) pairs of points and yet it is not $2$-extremal for all pairs of points - it follows from the fact that the Lempert function of two different points from the annulus is equal to the Poincar\'e distance of some two points from the unit disc which belong to the preimages of the given points of a holomorphic covering of the annulus - consult \cite[Chapter V]{Jar-Pfl 1993} for details. Recall also that it follows from the celebrated Lempert theorem that all weak $2$-extremals are $2$-complex geodesics. In our paper we get some results on the lacking implications in special classes of domains.

We start with the analysis of properties of extremals in classical domains. We shall focus on classical Cartan domains of the first, second and third type denoted by $\mathcal R^{n,m}_I$ and $\mathcal R^n_{II}$ and $\mathcal R^n_{III}$, respectively.
In particular, $\mathbb B_n=\mathcal R^{1,n}_I$ is the unit Euclidean ball. Note that results obtained here work for other classes of domains, not necessarily symmetric, like the Lie ball. In this section we shall use the letter $\mathcal R$ to denote any of these domains or their Cartesian products. We just demand $\mathcal R$ to be a bounded, convex and balanced domain in $\mathbb C^n$ whose group of holomorphic automorphisms acts transitively.

Since classical domains of $\mathbb C^{2\times 2}$ play a crucial role in the paper,
we put $\mathcal R_I:= \mathcal R^{2,2}_I$ and $\mathcal R_{II}:=\mathcal R^2_{II}$. For definitions and basic properties of the Cartan domains that we shall use in the paper we refer the Reader to \cite{Har} and \cite{Hua}.

\begin{remark}\label{remark:balanced-extremals} Assume that $D$ is a bounded balanced pseudoconvex domain and assume that $f:\mathbb D\to D$ is a weak $m$-extremal for $\lambda_1,\ldots,\lambda_m$ such that $f(0)=0$.
We may write $f(\lambda)=\lambda \psi(\lambda)$, $\lambda\in \mathbb D$ for some analytic disc $\psi$. Let $\mu_D$ denote the Minkowski functional of $D$. Pseudoconvexity of the domain $D$ guarantees that $\log \mu_D$ is plurisubharmonic (see e.g. \cite[Proposition 2.2.22]{Jar-Pfl 2000}). Note that $\log \mu_D\circ f<0$ on $\mathbb D$, whence $\limsup_{|\lambda|\to 1}\log \mu_D (\psi(\lambda))\leq 0$. As a consequence of the maximum principle for subharmonic functions we get that $\log \mu_D\circ \psi\leq 0$ on $\mathbb D$. In particular, the image of $\psi$ lies in $\bar D$. The maximum principle for subharmonic functions implies that if $\log \mu_D(\psi(0))=1,$ then $\log \mu_D\circ \psi\equiv 1$ and consequently the image of $\psi$ lies entirely in $\partial D$. Otherwise, $\mu_D\circ \psi<1$, whence $\psi$ is an analytic disc in $D$.

If the first possibility holds, i.e. $\mu_D\circ \psi\equiv 1$
then $\mu_D(f(\lambda))=|\lambda|$, which easily implies that $f$ is a weak $2$-extremal for $0$ and arbitrary $\lambda\in\mathbb D\setminus\{0\}$.

If the second possibility holds, i. e. $\psi:\DD\to D$, then in the case $\lambda_j\neq 0$ for any $j$ the mapping $\psi$ is a weak $m$-extremal for $\lambda_1,\ldots,\lambda_m$.
If, on the other hand,  $\lambda_m=0$ and $m\geq 3$, then $\psi$ is a weak $(m-1)$-extremal for $\lambda_1,\ldots,\lambda_{m-1}$.
\end{remark}

As a consequence of the above remark we get the following procedure of producing 'new' extremals from the existing ones.

\begin{remark}\label{remark:balanced-extremals2}

Let $f:\mathbb D\to D$, where $D$ is a balanced pseudoconvex domain, be a weak $m$-extremal for distinct points $\lambda_1,\ldots,\lambda_m\in\DD$.
Then the function $g$ given by the formula
$g(\lambda):=m_{\lambda_{m+1}}(\lambda)f(\lambda)$, $\lambda\in\DD$, where $\lambda_{m+1}\neq\lambda_j$, $j=1,\ldots,m$, is a weak $(m+1)$-extremal for $\lambda_1,\ldots,\lambda_{m+1}$.
\end{remark}

Note that in the above remarks we made extensive use of the pseudoconvexity of the balanced domain $D$ -- this is equivalent to the fact that the Minkowski functional $\mu_D$ is logarithmically plurisubharmonic (and certainly homogeneuous). The class of such domains obviously contains the balanced convex domains. Actually, we make use of the above remarks only in that type of domains; however, the general case is interesting, too; so we decided to leave the proof in the more general setting as it is essentially the same in both cases. 

\begin{prop}\label{prop:proper-extremals} Let $f:\DD\to\mathcal R$ be a weak $m$-extremal for some $m$ distinct points in a classical Cartan domain $\mathcal R$, $m\geq 2$. Then $f$ is proper.
\end{prop}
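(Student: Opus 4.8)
The plan is to argue by induction on $m$, peeling off one Blaschke factor at a time by means of Remark~\ref{remark:balanced-extremals} and using the transitivity of $\Aut(\mathcal R)$ to put the map into a normal form. First, since $\mathcal R$ is bounded, balanced and convex, its Minkowski functional $\mu_{\mathcal R}$ is a norm with $\mathcal R=\{\mu_{\mathcal R}<1\}$ and $\partial\mathcal R=\{\mu_{\mathcal R}=1\}$; as every compact subset of $\mathcal R$ lies in some sublevel set $\{\mu_{\mathcal R}\le 1-\eps\}$, a holomorphic $g:\DD\to\mathcal R$ is proper if and only if $\mu_{\mathcal R}(g(\lambda))\to 1$ as $|\lambda|\to 1$. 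Next, both properness and weak $m$-extremality are unaffected if $f$ is replaced by $\Phi\circ f\circ\omega$ with $\Phi\in\Aut(\mathcal R)$ and $\omega\in\Aut(\DD)$ (the data $\lambda_j\mapsto z_j$ are extremally solvable exactly when $\omega^{-1}(\lambda_j)\mapsto\Phi(z_j)$ are). Using transitivity of $\Aut(\mathcal R)$, I would therefore assume from the start that $\lambda_m=0$ and $f(0)=0$, so that Remark~\ref{remark:balanced-extremals} applies directly.

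For the base case $m=2$ I would invoke the Lempert theorem: as recalled above, every weak $2$-extremal is a $2$-complex geodesic, so there is $F\in\OO(\mathcal R,\DD)$ with $F\circ f$ a non-constant Blaschke product of degree at most $1$, i.e. an automorphism of $\DD$; composing $F$ with its inverse yields a left inverse $F_0\in\OO(\mathcal R,\DD)$ with $F_0\circ f=\id_{\DD}$. If $f$ were not proper there would be $\lambda_n\to\partial\DD$ with $f(\lambda_n)\to w$ for some $w\in\mathcal R$, whence $\lambda_n=F_0(f(\lambda_n))\to F_0(w)\in\DD$, contradicting $|\lambda_n|\to 1$.

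For the inductive step ($m\ge 3$) I would write $f(\lambda)=\lambda\psi(\lambda)$ and split according to Remark~\ref{remark:balanced-extremals}. In the first alternative $\mu_{\mathcal R}\circ\psi\equiv 1$, hence $\mu_{\mathcal R}(f(\lambda))=|\lambda|\to 1$ as $|\lambda|\to 1$ and $f$ is proper. In the second alternative $\psi:\DD\to\mathcal R$ and, because $\lambda_m=0$ and $m\ge 3$, $\psi$ is a weak $(m-1)$-extremal for $\lambda_1,\dots,\lambda_{m-1}$; by the inductive hypothesis $\psi$ is proper, i.e. $\mu_{\mathcal R}(\psi(\lambda))\to 1$ as $|\lambda|\to 1$, and therefore $\mu_{\mathcal R}(f(\lambda))=|\lambda|\,\mu_{\mathcal R}(\psi(\lambda))\to 1$, so $f$ is proper as well.

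I do not expect a serious obstacle here, as the substantive work is already packaged in Remark~\ref{remark:balanced-extremals} and in the Lempert theorem; the two points that need care are the reduction step (transitivity of $\Aut(\mathcal R)$ is exactly what lets one normalise $f(0)=0$ so that the remark is applicable, and one must check properness and weak extremality survive this change) and the case distinction in that remark — in the "boundary" case properness is immediate from $\mu_{\mathcal R}\circ f=|\cdot|$, while in the "interior" case it is essential that the normalisation $\lambda_m=0$ makes $\psi$ a weak $(m-1)$-extremal so that the induction on $m$ closes. Convexity of $\mathcal R$ enters essentially only through the base case $m=2$.
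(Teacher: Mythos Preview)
Your proof is correct and follows essentially the same inductive scheme as the paper's own argument, using Remark~\ref{remark:balanced-extremals} to peel off a factor of $\lambda$ after normalising via $\Aut(\mathcal R)$. The only difference is in the base case $m=2$: the paper argues directly, via the Schwarz lemma for balanced domains, that any weak $2$-extremal with $f(0)=0$ satisfies $\mu_{\mathcal R}(f(\lambda))=|\lambda|$ (so properness is immediate), whereas you invoke the full Lempert theorem to obtain a left inverse. Both arguments are valid in this setting; the paper's is slightly more elementary since it avoids Lempert's theorem at that step, but your version has the virtue of making explicit why convexity matters for the base case.
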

\begin{proof} We proceed inductively with respect to $m$. In case $m=2$ it follows from the transitivity of the group of automorphisms and the fact that any weak $2$-extremal $f:\DD\to\mathcal R$ with $f(0)=0$ is such that
$\mu_{\mathcal R}(f(\lambda))=|\lambda|$, $\lambda\in\DD,$ where $\mu_{\mathcal R}$ is the Minkowski functional of $\mathcal R$.

Let now $f$ be a weak $m$-extremal for $\lambda_1,\ldots,\lambda_m$ in $\mathcal R$, $m\geq 3$. Due to transitivity of $\mathcal R$ we may assume that $\lambda_m=0$ and $f(0)=0$.
To finish the proof it is sufficient to make use of Remark~\ref{remark:balanced-extremals} and inductive assumption for $\psi$ as defined in Remark~\ref{remark:balanced-extremals} (in the case $\psi$ does not lie in the boundary of $\mathcal R$).
\end{proof}

\begin{remark} Recall (see e.g. \cite{Har, Hua}) that the classical domains are homogeneous (i.e. their group of holomorphic automorphisms act transitively), balanced and convex.
\end{remark}

\begin{prop}\label{prop} Any weak $m$-extremal in any of the the classical Cartan domains $\mathcal R$ is an $m$-extremal.
\end{prop}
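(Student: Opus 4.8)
The plan is to reduce the claim to the known absence of the gap between weak $m$-extremals and $m$-extremals for disc maps into $\mathbb{D}$, exploiting the rigidity that classical Cartan domains provide through Proposition~\ref{prop:proper-extremals} together with the structure of the Shilov boundary.

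\textbf{Step 1: Induction on $m$ and normalisation.} I would argue by induction on $m\ge 2$, mirroring the structure of the proof of Proposition~\ref{prop:proper-extremals}. The base case $m=2$ is the content of Lempert's theorem (Theorem~\ref{thm:lempert}), since $\mathcal{R}$ is bounded and convex: every weak $2$-extremal is a complex geodesic, hence in particular a $2$-extremal. For the inductive step, let $f:\mathbb{D}\to\mathcal{R}$ be a weak $m$-extremal for distinct points $\lambda_1,\dots,\lambda_m$, with $m\ge 3$. By transitivity of $\Aut(\mathcal{R})$ we may assume $\lambda_m=0$ and $f(0)=0$, and write $f(\lambda)=\lambda\psi(\lambda)$ as in Remark~\ref{remark:balanced-extremals}.

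\textbf{Step 2: Dispose of the boundary case.} Remark~\ref{remark:balanced-extremals} splits the analysis into two cases. If $\mu_{\mathcal{R}}\circ\psi\equiv 1$, then $\mu_{\mathcal{R}}(f(\lambda))=|\lambda|$ on $\mathbb{D}$; here I would use the fact that such an $f$ (being, after the normalisation, the "radial" extremal attached to a single boundary point of the balanced domain) behaves like a geodesic of the Minkowski functional, so that the problem $\lambda_j\mapsto f(\lambda_j)$ is extremally solvable for \emph{every} choice of $m$ points, not just the given one — this is where convexity/homogeneity of $\mathcal{R}$ enters, via the Carath\'eodory–Lempert coincidence applied to $\mu_{\mathcal{R}}\circ f$. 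In the complementary case $\psi:\mathbb{D}\to\mathcal{R}$, Remark~\ref{remark:balanced-extremals} tells us $\psi$ is a weak $(m-1)$-extremal for $\lambda_1,\dots,\lambda_{m-1}$; by the inductive hypothesis $\psi$ is an honest $(m-1)$-extremal, and Remark~\ref{remark:balanced-extremals2} (multiplying back by the Blaschke factor $m_0(\lambda)=-\lambda$, or more generally running the construction in reverse) upgrades $f=\lambda\psi$ to an $m$-extremal for \emph{all} systems of $m$ points. One must check carefully that Remark~\ref{remark:balanced-extremals2}, stated for weak extremals, in fact preserves the stronger "for all points" property once the starting disc has it — this is essentially a Schwarz–Pick/Nevanlinna–Pick degree count, transported through a fixed left inverse.

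\textbf{Step 3: Assemble and handle degeneracies.} Finally I would record that properness (Proposition~\ref{prop:proper-extremals}) guarantees $\psi$ is non-constant and that the dichotomy of Step~2 is exhaustive, so the induction closes. A minor technical point to tidy up: when several of the $\lambda_j$ coincide with $0$ after normalisation this cannot happen since the $\lambda_j$ are distinct, but one should make sure the reduction $\lambda_m=0$ is harmless, which it is because $0\in\mathbb{D}$ and we may post-compose with an automorphism of $\mathbb{D}$. The main obstacle I anticipate is Step~2, and specifically proving that the "new extremal" operation of Remark~\ref{remark:balanced-extremals2} converts a genuine $(m-1)$-extremal into a genuine $m$-extremal (not merely a weak one): the subtlety is that extremality "for all $m$-tuples" is not obviously stable under multiplication by a Blaschke factor, and one likely needs to exploit that a classical Cartan domain admits, for each relevant boundary situation, a holomorphic left inverse $F:\mathcal{R}\to\mathbb{D}$ with $F\circ f$ a Blaschke product of the expected degree, reducing everything to the one-variable Pick problem where the analogous statement is classical.
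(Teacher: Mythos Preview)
Your inductive skeleton matches the paper's: normalise to $\lambda_m=0$, $f(0)=0$, write $f(\lambda)=\lambda\psi(\lambda)$, dispose of the boundary case via Lempert, and apply the inductive hypothesis to $\psi$. You also correctly isolate the one genuine difficulty, namely that once $\psi$ is an honest $(m-1)$-extremal it is not obvious that $f=\lambda\psi$ is extremal for \emph{every} $m$-tuple $\sigma_1,\dots,\sigma_m$ (the case where some $\sigma_j=0$ is easy; the case where all $\sigma_j\neq 0$ is not). But your proposed resolution --- producing a left inverse $F:\mathcal R\to\mathbb D$ with $F\circ f$ a Blaschke product of the right degree and then invoking one-variable Pick theory --- does not work: that would make $f$ an $m$-complex geodesic, and Proposition~\ref{prop:four-extremals-are-not-geodesics} shows this fails already in the Euclidean ball for $m\geq 4$. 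So the left-inverse route is genuinely blocked, not merely delicate, and Remark~\ref{remark:production-of-extremals-in-classical-domains} (which says exactly what you want about multiplying by a Blaschke factor) cannot be invoked here since it is a \emph{consequence} of the proposition you are proving.

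The paper closes the gap by a second application of transitivity of $\Aut(\mathcal R)$, recentring at $f(\sigma_1)=\sigma_1\psi(\sigma_1)$ rather than at $0$. Given a hypothetical $g:\overline{\mathbb D}\to\mathcal R$ with $g(\sigma_j)=f(\sigma_j)$ and all $\sigma_j\neq 0$, let $\Psi$ be the automorphism of $\mathcal R$ interchanging $0$ and $\sigma_1\psi(\sigma_1)$; then $\Psi\circ g$ vanishes at $\sigma_1$, so $\tilde g(\lambda):=m_{\sigma_1}(\lambda)^{-1}\Psi(g(\lambda))$ maps $\overline{\mathbb D}$ into $\mathcal R$. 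The same transform applied to $f$ yields $\chi(\lambda):=m_{\sigma_1}(\lambda)^{-1}\Psi(\lambda\psi(\lambda))$, and one checks that $\chi$ is $(m-1)$-extremal (if not, unwinding the construction at $0,\sigma_3,\dots,\sigma_m$ contradicts the $(m-1)$-extremality of $\psi$ at $\sigma_1,\sigma_3,\dots,\sigma_m$); since $\tilde g$ agrees with $\chi$ at $\sigma_2,\dots,\sigma_m$, this gives the contradiction. The idea you are missing is this recentring trick: one forces a zero into the competitor $g$ not by moving the nodes in $\mathbb D$ but by moving the target with an automorphism of $\mathcal R$.
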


\begin{proof}
We will proceed inductively. For $m=2$ the assertion is a simple consequence of the Lempert Theorem.

So assume that $m\geq 3$, any $(m-1)$-weak extremal is an $(m-1)$-extremal and let $f$ be a weak $m$-extremal with respect to $\lambda_1,\ldots,\lambda_m$. Composing $f$ with a M\"obius map we may assume that $\lambda_m=0$.
Thanks to transitivity of $\Aut(\mathcal R)$ one may moreover assume that $f(0)=0.$ Then $f(\lambda)=\lambda \varphi(\lambda)$, where $\varphi$ is an analytic disc in $\overline{\mathcal R}$. 

If $\varphi(0)\in \partial \mathcal R$, then $f$ is a weak $2$-extremal for $0$, $\lambda$ for any $\lambda\in\mathbb D\setminus\{0\}$ (see Remark~\ref{remark:balanced-extremals}) and thus, in view of (for instance) the Lempert theorem $2$-extremal.

In the other case, $\varphi$ is an analytic disc in $\mathcal R$ and it is a weak $(m-1)$-extremal for some system of points and thus, due to the inductive assumption $(m-1)$-extremal in $\mathcal R$. Take any points $\sigma_1,\ldots ,\sigma_m\in \mathbb D$. We claim that $f$ is a weak extremal with respect to them. If $\sigma_j=0$ for some $j$ (without loss of generality assume that $j=m$)
then the fact that $f$ would not be a weak $m$-extremal for $\sigma_1,\ldots,\sigma_m$ would deliver an analytic disc lying relatively compactly in $\mathcal R$ coinciding with $\varphi$ at points $\sigma_1,\ldots,\sigma_{m-1}$ contradicting the $(m-1)$-extremality of $\varphi$. So suppose that $\sigma_j$ does not vanish. Seeking a contradiction assume that one may find a holomorphic mapping $g:\overline{\mathbb D} \to \mathcal R$
such that $g(\sigma_j)=f(\sigma_j)$, $j=1,\ldots,m$. Let $\Psi_a$ denote the automorphism of $\mathcal R$ such that $\Psi_a(a)=0,$ $\Psi_a(0)=a$.

Since $g(\sigma_1)= \sigma_1 \varphi(\sigma_1)$ we see that an analytic disc $\psi$ given by the formula
$\psi(\lambda) = \frac{1}{m_{\sigma_1}(\lambda)} \Psi_{\sigma_1 \varphi(\sigma_1)}( g(\lambda))$ maps
$\mathbb D$ into $\mathcal R$ (note that the image of $\psi$ cannot lie in $\partial \mathcal R$). We shall show that $\psi$ is a weak $(m-1)$-extremal in $\mathcal R$.
This would give a contradiction as $\psi(\bar{\mathbb D})\subset \mathcal R$. It suffices to show that $\chi:\lambda\mapsto \frac{1}{m_{\sigma_1}(\lambda)} \Psi_{\sigma_1 \varphi(\sigma_1)} (\lambda \varphi(\lambda))$ is a weak $(m-1)$-extremal, as it agrees with $\psi$ at points $\sigma_j$, $j=2,\ldots,m$. Note that $\chi(0)=\varphi(\sigma_1)$.

If $\chi$ were not an $(m-1)$-extremal then we would be able to find an analytic disc $\tilde \chi:\overline{\mathbb D} \to \mathcal R$ such that $\chi(0)=\tilde \chi(0)$ and $\chi(\sigma_j)=\tilde \chi (\sigma_j)$ for $j=3,\ldots,m$. Then $\tilde \varphi(\lambda):=\frac{1}{\lambda} \Psi_{\sigma_1 \varphi(\sigma_1)} (m_{\sigma_1}(\lambda) \tilde \chi(\lambda))$ would be well defined (we remove singularity at $0$) and would agree with $\varphi$ at $\sigma_3,\ldots, \sigma_m$. Moreover, it follows immediately from the definition that $\tilde \varphi(\sigma_1)=\varphi(\sigma_1).$ This gives a desired contradiction.

\end{proof}

Problem: Does a similar result hold for the symmetrised bidisc $\mathbb G_2$ or the tetrablock $\mathbb E$ if $m\geq 3$? Does a similar result hold for any convex domain?

\begin{remark}\label{remark:production-of-extremals-in-classical-domains} In the classical domains the fact that all weak extremals are extremals allows us to produce new extremals from the existing ones. For instance, let $f:\DD\to\mathcal R$ be
an $m$-extremal. Then, in view of Remark~\ref{remark:balanced-extremals2} the function $m_{\alpha}(\lambda)f(\lambda)$ is a weak $(m+1)$-extremal for $\lambda_1,\ldots,\lambda_m,\alpha$ for $m$ distinct points $\lambda_1,\ldots,\lambda_m$, $\alpha$ where $\alpha\neq\lambda_j$ and thus it is $(m+1)$-extremal.
Consequently, the function $B\cdot f$ where $B$ is a Blaschke product of degree $k$ is an $(m+k)$-extremal. In particular, the function $g$ given by the formula $g(\lambda)=\lambda^kf(\lambda)$, $\lambda\in\DD$, is an $(m+k)$-extremal. This observation will be used later.
\end{remark}

\section{$m$-complex geodesics and $m$-extremals in the unit Euclidean ball}
Let $f:\DD\to \BB_n$ be a $3$-extremal in the unit ball that is not a $2$-extremal, $n\geq 2$. Composing it with an automorphism of $\BB_n$ we may assume that $f(0)=0$. Let us write $f(\lambda) = \lambda g(\lambda),$ where $g$ is $2$-extremal in $\BB_n$. Composing $g$ with a unitary automorphism of $\BB_n$ we may additionally require that $g$ passes through points of the form $(a_1, b_1,0,\ldots, 0)$ and $(a_1, b_2,0,\ldots, 0)$, where $a_1\geq 0$. Since any $2$-extremal in $\BB_n$ is the image of $\lambda\mapsto (\lambda, 0,\ldots, 0)$ under an automorphism of $\BB_n$, making use of the description $\Aut(\BB_n)$ (see e.g. \cite[Appendix]{Jar-Pfl 1993}) we easily find that $g(\lambda) = (a_1, \sqrt{1-a_1^2} m(\lambda), 0,\ldots, 0)$, $\lambda\in \DD$, for some M\"obius map $m.$

Consequently, any $3$-extremal in the unit ball is, up to a composition with an automorphism of $\mathbb B_n$, of the form 
\begin{equation}
\lambda\mapsto (\lambda a_1,\sqrt{1-a_1^2}\lambda m(\lambda),0,\ldots,0),
\end{equation} 
where $m$ is a M\"obius map and $a_1\geq 0$. We do not know whether such 3-extremals are $3$-complex geodesics but we are able to show it at least in the case when $m$ is a rotation.

\begin{prop}\label{prop:three-extremals-are-geodesics} Let $f\in\OO(\DD,\BB_n)$, $n\geq 2$ be a $3$-extremal of the form $\lambda\mapsto (\lambda a_1,\sqrt{1-a_1^2}\lambda ^2,0,\ldots,0)$, where $a_1\geq 0$. Then $f$ is a $3$-complex geodesic.
\end{prop}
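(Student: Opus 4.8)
The plan is to exhibit explicitly a function $F\in\OO(\BB_n,\DD)$ such that $F\circ f$ is a non-constant Blaschke product of degree at most $2$. Writing $f(\lambda)=(\lambda a_1,\sqrt{1-a_1^2}\,\lambda^2,0,\ldots,0)$, a natural first guess is to look for $F$ of the form $F(z)=\alpha z_1+\beta z_1^2+\gamma z_2$ with constants chosen so that $F(f(\lambda))=\alpha a_1\lambda+(\beta a_1^2+\gamma\sqrt{1-a_1^2})\lambda^2$. If one can select $\alpha,\beta,\gamma$ so that this quadratic polynomial in $\lambda$ is a Blaschke product of degree $2$ (e.g. of the form $\lambda\cdot m_c(\lambda)$ for a suitable $c$, or more generally $e^{i\theta}\lambda m_c(\lambda)$) \emph{and} simultaneously $|F|<1$ on $\BB_n$, we are done. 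So the proof splits into two tasks: first, choosing the coefficients so that the composition is a genuine degree-$2$ Blaschke product; second, verifying the norm bound $\sup_{\BB_n}|F|\le 1$.

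For the first task I would exploit the one free parameter. Note $F\circ f$ is automatically divisible by $\lambda$ (it vanishes at $0$), so it suffices to arrange that $\alpha a_1\lambda+(\beta a_1^2+\gamma\sqrt{1-a_1^2})\lambda^2=e^{i\theta}\lambda(\lambda-c)/(1-\bar c\lambda)$ — but the left side is a polynomial, not a rational function, so in fact I should instead aim directly for the polynomial Blaschke product $\lambda\mapsto \lambda m_c(\lambda)$ only when that is polynomial, i.e. when $c=0$, giving $\lambda^2$; more usefully, allow $F$ itself to be a rational (linear-fractional) function of $z$, say $F(z)=(\alpha z_1+\beta z_1^2+\gamma z_2)/(1-\delta z_1)$ or similar, so that $F\circ f$ becomes $\lambda\cdot m_c(\lambda)$ for a prescribed $c\in\DD$. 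Concretely, one picks $c$ and $\delta$ matching denominators, then solves the resulting linear equations for $\alpha,\beta,\gamma$; this is routine linear algebra once the ansatz is fixed, and nondegeneracy (so that the product is genuinely degree $2$, not degree $1$ or constant) holds for generic parameter choices, in particular whenever $a_1\in(0,1)$. The degenerate endpoints $a_1=0$ and $a_1=1$ should be checked directly: $a_1=1$ gives $f(\lambda)=(\lambda,0,\ldots,0)$ which is even a $2$-complex geodesic, and $a_1=0$ gives $f(\lambda)=(0,\lambda^2,0,\ldots,0)$, with $F(z)=z_2$ yielding $F\circ f=\lambda^2$.

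The main obstacle — and the step deserving the most care — is the second task: proving $|F|\le 1$ on $\BB_n$ for the $F$ produced above. Since $F$ depends only on $(z_1,z_2)$, it reduces to a bound on $\BB_2$, and in fact on the variety-free estimate $\sup\{|\alpha z_1+\beta z_1^2+\gamma z_2|:|z_1|^2+|z_2|^2<1\}$ (with the chosen linear-fractional denominator). Fixing $z_1=re^{it}$ with $r\in[0,1)$, the supremum over the remaining freedom in $z_2$ (which ranges over the disc of radius $\sqrt{1-r^2}$) is attained on the boundary and equals $|\alpha z_1+\beta z_1^2|+|\gamma|\sqrt{1-r^2}$; so everything comes down to the single-variable inequality $|\alpha re^{it}+\beta r^2e^{2it}|+|\gamma|\sqrt{1-r^2}\le 1$ for all $r\in[0,1)$, $t\in\RR$. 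I expect the coefficients coming from the Blaschke-product normalization to make this sharp (equality on a curve corresponding to $f(\TT)$), and the inequality should follow from Cauchy–Schwarz or a direct calculus argument after squaring; alternatively one can verify $1-|F|^2\ge 0$ on $\BB_2$ by writing $1-|F(z)|^2$ as a sum of squares using $1-|z_1|^2-|z_2|^2\ge 0$. Once the norm bound and the degree-$2$ Blaschke property are both in hand, $F$ witnesses that $f$ is a $3$-complex geodesic, completing the proof.
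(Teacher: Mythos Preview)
Your approach is the paper's: exhibit an explicit $F$ with $F\circ f$ a degree-$2$ Blaschke product and check $|F|<1$ on $\BB_n$. But you talk yourself out of the case that actually works --- your polynomial ansatz with $\alpha=0$ and target $\lambda^2$ is exactly what the paper uses, in one line:
\[
F(z)=\frac{z_1^2}{2-a_1^2}+\frac{2\sqrt{1-a_1^2}}{2-a_1^2}\,z_2,
\]
so that $F(f(\lambda))=\lambda^2$; the norm bound follows from $|z_1^2|+2\sqrt{1-a_1^2}\,|z_2|\le |z_1|^2+2\sqrt{(1-a_1^2)(1-|z_1|^2)}\le 2-a_1^2$, the last maximum occurring at $|z_1|^2=a_1^2$. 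The detour through linear-fractional $F$ and general $c\in\DD$ is unnecessary.
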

\begin{proof} It suffices to observe that the left inverse to the $3$-extremal $f$ (given by the formula $f(\lambda)=(a_1\lambda,\sqrt{1-a_1^2}\lambda^2,0,\ldots,0)$) may be chosen as follows:
\begin{equation}
F(z):=\frac{z_1^2}{2-a_1^2}+\frac{2\sqrt{1-a_1^2}}{2-a_1^2}z_2,\quad z\in\BB_n.
\end{equation}
It is simple to see that $F\in\OO(\BB_n,\DD)$ and $F(f(\lambda))=\lambda^2$, $\lambda\in\DD$.
\end{proof}

It is not clear from the first view why the left inverse in the above result is of the form as given above. In fact, the idea that resulted in that form will be more clear after the study
of the proof of the next result where we shall prove that there are $4$-extremals in the unit ball which are not $4$-complex geodesics. Note that making use of the procedures described in the preceing section one may relatively easily produce necessary form for $m$-extremals. Another way of finding the necessary form of
$m$-extremals (in more general domains called complex ellipsoids) was presented in \cite{Edi1995}.

\begin{prop}\label{prop:four-extremals-are-not-geodesics} Let $k\geq 2$. The function
\begin{equation}
f:\DD\owns\lambda\mapsto (a_1\lambda^k,\sqrt{1-a_1^2}\lambda^{k+1})
\end{equation}
where $a_1\in(0,1)$  is a $(k+2)$-extremal in the unit ball $\BB_2$ which is not a $(k+2)$-geodesic.
\end{prop}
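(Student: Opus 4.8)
The plan is to establish the two parts of the statement separately: first that $f$ is a $(k+2)$-extremal, then that it is not a $(k+2)$-complex geodesic. For the first part, I would begin with the observation that $g\colon\DD\owns\lambda\mapsto(a_1\lambda,\sqrt{1-a_1^2}\lambda^2)$ is a $3$-extremal in $\BB_2$; this is exactly the content of Proposition~\ref{prop:three-extremals-are-geodesics} (which also exhibits an explicit left inverse $F$ with $F\circ g=\operatorname{id}_\DD^2$, i.e.\ $F\circ g$ is the Blaschke product $\lambda\mapsto\lambda^2$ of degree $2$, so $g$ is even a $3$-complex geodesic). Since $f(\lambda)=\lambda^{k-1}g(\lambda)$, Remark~\ref{remark:production-of-extremals-in-classical-domains} (with $B(\lambda)=\lambda^{k-1}$, a Blaschke product of degree $k-1$) immediately gives that $f$ is a $(3+(k-1))=(k+2)$-extremal in $\BB_2$. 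This disposes of the positive half.

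The harder half is to show $f$ is \emph{not} a $(k+2)$-complex geodesic, i.e.\ that there is no $F\in\OO(\BB_2,\DD)$ with $F\circ f$ a non-constant Blaschke product of degree at most $k+1$. The strategy is to suppose such an $F$ exists and derive a contradiction by analysing the boundary behaviour of $F\circ f$ together with a rigidity/Schwarz-lemma argument at the zeros. Write $B:=F\circ f\in\OO(\DD,\DD)$; it is a Blaschke product of degree $d\le k+1$. The key structural facts I would extract are: (a) $f(0)=0$ and $f$ vanishes to order $k$ at $0$, so since $F(0)$ could be nonzero one should first normalise — composing $F$ with a M\"obius map of $\DD$ we may as well have to handle $F(0)$; (b) the derivative information: $f$ and all its derivatives up to order $k-1$ vanish at $0$, which forces $B$ to vanish to high order at $0$ unless $dF_0=0$, and one must track how the $k$-fold vanishing of $f$ constrains the multiplicity of the zero of $B$ at $0$. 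The cleanest route is: since $f=\lambda^{k-1}g$ with $g(0)=(a_1,0)\ne 0$, the composition $F\circ f$ near $0$ behaves like $\lambda^{k-1}\cdot(\text{something})$ only if $F$ vanishes at $g(0)$-direction appropriately; more precisely $\operatorname{ord}_0(F\circ f)\ge$ the order forced by the chain rule, and I would compute $\operatorname{ord}_0(F\circ f)$ in terms of $\operatorname{ord}_{f(0)}F$ along the curve $f$.

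Concretely, I expect the argument to run as follows. If $F\circ f$ has degree $\le k+1$ and $F\circ f$ must ``use up'' its zeros: the curve $\lambda\mapsto f(\lambda)$ hits $0\in\BB_2$ only at $\lambda=0$, and near there $f(\lambda)=(a_1\lambda^k,\sqrt{1-a_1^2}\lambda^{k+1})=\lambda^k(a_1,\sqrt{1-a_1^2}\lambda)$. Hence if $F(0)\ne 0$ we may post-compose with $m_{F(0)}$ and assume $F(0)=0$; then $B(\lambda)=F(\lambda^k(a_1,\sqrt{1-a_1^2}\lambda))$ vanishes at $\lambda=0$ to order at least $k$ (the leading term of $F$ at the origin, applied to the vector $(a_1,\sqrt{1-a_1^2}\lambda)$, contributes $\lambda^k$ times a non-vanishing factor $a_1^{\,\ell}+\cdots$ as long as the relevant partial derivative $\partial F/\partial z_1$ at $0$ is nonzero — one must also treat the degenerate case $\partial F/\partial z_1(0)=0$, where the order jumps to $\ge k+1$). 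So $B$ has a zero of order $\ge k$ at $0$, leaving at most one further zero; this pins down $B$ up to a rotation as $\lambda^k\cdot m_c(\lambda)$ or $\lambda^{k+1}$ or $\lambda^k$. In any of these cases I would then use the equality $|B(e^{i\theta})|=1$ a.e.\ on $\TT$ — i.e.\ $F$ maps $f(\TT)$ into $\TT=\partial\DD$ — to constrain $F$ strongly: $F$ restricted to the real-analytic curve $f(\TT)\subset\partial\BB_2$ is unimodular, and combined with $F\in\OO(\BB_2,\DD)$ this should force $F$ to factor through the geodesic projection, contradicting the low degree. The main obstacle — and where I would spend the most care — is ruling out the case $B=\lambda^k m_c(\lambda)$ with $c\ne 0$ (degree exactly $k+1$): here the ``extra'' zero gives $F$ just enough freedom that a naive order count does not close, and one genuinely needs the boundary unimodularity of $F\circ f$ on $\TT$, probably via examining $F$ along the two boundary arcs where $f(e^{i\theta})$ has $|z_1|=a_1$, $|z_2|=\sqrt{1-a_1^2}$, and showing that $dF$ cannot simultaneously be unimodular on this one-parameter family and holomorphic with such a low-degree composite. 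I anticipate the final contradiction comes from a dimension/degree count: a genuine $(k+2)$-complex geodesic would have to be, after automorphisms, the ``$\lambda^{k+1}$'' or ``$\lambda^k$'' type map into a complex geodesic disc, whereas $f$ visibly has image not contained in any affine complex line through $0$ (its two components are $a_1\lambda^k$ and $\sqrt{1-a_1^2}\lambda^{k+1}$, which are not proportional), so $F\circ f$ cannot be a Blaschke product of the minimal degree $k+1$ — the image-geometry obstruction is the heart of the matter.
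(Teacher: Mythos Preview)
Your treatment of the $(k+2)$-extremality is correct and matches the paper: $f(\lambda)=\lambda^{k-1}g(\lambda)$ with $g$ a $3$-extremal, and Remark~\ref{remark:production-of-extremals-in-classical-domains} finishes it. Your setup for the non-geodesic half is also right: after normalising $F(0)=0$, the Blaschke product $B=F\circ f$ must vanish to order at least $k$ at $0$, so (up to a unimodular constant) $B(\lambda)=\lambda^k$ or $B(\lambda)=\lambda^k m_\gamma(\lambda)$ with $\gamma\in\DD$.

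The gap is in what comes next. Your proposed route --- boundary unimodularity of $F$ along the curve $f(\TT)\subset\partial\BB_2$, forcing $F$ to ``factor through the geodesic projection'' --- is not an argument: the hypothesis $|F\circ f|=1$ on $\TT$ is just a restatement of $B$ being inner, and there is no reason a holomorphic $F:\BB_2\to\DD$ that is unimodular on a single real-analytic curve in $\partial\BB_2$ must factor through a linear projection. Your final ``image-geometry obstruction'' is also wrong as stated: being an $m$-complex geodesic does \emph{not} require the image of $f$ to lie in an affine complex line --- it only requires the existence of \emph{some} left inverse $F$ with $F\circ f$ a low-degree Blaschke product, and $F$ can be highly nonlinear. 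The paper's actual mechanism is elementary and quantitative: write the linear part of $F$ as $\alpha z_1+\beta z_2$; the Schwarz-type fact that $\limsup_{|\lambda|\to 1}|F(\lambda z)/\lambda|\le 1$ forces $|\alpha|^2+|\beta|^2\le 1$. One then compares Taylor coefficients of $F(a_1\lambda^k,\sqrt{1-a_1^2}\,\lambda^{k+1})$ with those of $B$. For $k\ge 3$ the coefficient of $\lambda^{k+2}$ on the left is $0$ (the quadratic terms of $F$ enter only at order $\ge 2k\ge k+3$), which kills the case $\gamma\ne 0$ immediately; then $|\alpha a_1|=1$ or $|\beta|\sqrt{1-a_1^2}=1$ contradicts $|\alpha|^2+|\beta|^2\le 1$. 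For $k=2$ the quadratic terms of $F$ do interfere at order $4$, so one instead reads off $|\alpha a_1|=|\gamma|$ and $|\beta|\sqrt{1-a_1^2}=1-|\gamma|^2$ from the $\lambda^2$ and $\lambda^3$ coefficients, and a short algebraic manipulation shows these are incompatible with $|\alpha|^2+|\beta|^2\le 1$ for any $\gamma\in\DD$, $a_1\in(0,1)$. The missing idea in your sketch is precisely this constraint on the linear part of $F$ and the direct coefficient comparison it enables.
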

\begin{proof}
We already know that the analytic disc $f$ is a $(k+2)$-extremal (use Remark~\ref{remark:production-of-extremals-in-classical-domains}). Suppose that there is an $F\in\OO(\BB_2,\DD)$ such that $F(0)=0$ and $F\circ f$ is a non-constant
Blaschke product of degree at most $k+1$. First we consider the case $k\geq 3$.  Expanding around $0$ and comparing the Taylor coefficients on both sides of the equality:
\begin{equation}
F\left(\lambda^ka_1,\lambda^{k+1}\sqrt{1-a_1^2}\right)=B(\lambda)
\end{equation}
we easily get (multiplying by a unimodular constant) that $B(\lambda)=\lambda^k$ or $B(\lambda)=\lambda^km_{\gamma}(\lambda)$ for some $\gamma\in\DD$. Write the Taylor expansion of $F$ in the form $\alpha z_1+\beta z_2+\ldots$. In the first case $|\alpha a_1|=1$ which gives $|\alpha|>1$ -contradiction. Consider now the second case. The coefficient at the left side at $\lambda^{k+2}$ is $0$ which means that $\gamma=0$. Write $F(z_1,z_2)=\alpha z_1+\beta z_2+o(z_1,z_2)$. Since $\limsup_{|\lambda|\to 1}|(F(\lambda(z_1,z_2)))/\lambda|\leq 1$ we easily get that  $\alpha z_1+\beta z_2\in\DD$ for any $z\in\BB_2$ which shows that $|\alpha|^2+|\beta|^2\leq 1$. Then the coefficient at $\lambda^{k+1}$ on the left side is $\beta\sqrt{1-a_1^2}$ which cannot have absolute value one as it does on the right side.

Now we assume that $k=2$.
\begin{equation}
F\left(\lambda^2a_1,\lambda^{3}\sqrt{1-a_1^2}\right)=B(\lambda).
\end{equation}
where $B(\lambda)=\lambda^2m_{\gamma}(\lambda)$ (the case when the degree of $B$ is one or two is simple).
Write the Taylor expansion of $F$ as $\alpha z_1+\beta z_2+\ldots$. Comparing the coefficients at $\lambda^2$ and at $\lambda^3$ leads us to
equalities $|\alpha a_1|=|\gamma|$, $|\beta |\sqrt{1-a_1^2}=1-|\gamma|^2$. We also know that $\alpha z_1+\beta z_2\in\DD$ for any $z\in\BB_2$ which is equivalent to the inequality $|\alpha|^2+|\beta|^2\leq 1$. Consequently,
\begin{equation}
\frac{|\gamma|^2}{a_1^2}+\frac{(1-|\gamma|^2)^2}{1-a_1^2}\leq 1.
\end{equation}
The last inequality is equivalent to
\begin{equation}
|\gamma|^2(1-a_1^2)+(1-|\gamma|^2)^2a_1^2-a_1^2(1-a_1^2)\leq 1
\end{equation}
or
\begin{equation}
(a_1^2-|\gamma|^2)^2+|\gamma|^2(1-|\gamma|^2)(1-a_1^2)\leq 0.
\end{equation}
The last inequality does not hold for any $\gamma\in\DD$, $a_1\in(0,1)$ - contradiction.

\end{proof}


Problem. It would be interesting to know whether 3-extremals in the unit ball are 3-complex geodesic. Even if the answer to this question is positive we think that a counterpart of the Lempert theorem for $3$-extremals in the convex domains does not hold, i.e. there is a convex domain $D$
and a $3$-extremal $f$ in $D$ for which there is no left inverse $F$ such that the composition of $F\circ f$ is a non-constant Blaschke product of degree at most two.

\section{(weak) $m$-extremals in the symmetrised bidisc}

In the present section we get results that show how (weak)  $m$-extremals in the symmetrised bidisc look like. Recall that \textit{the symmetrised bidisc} $\mathbb G_2$ is the image of $\mathbb D^2$ under the mapping $(z_1,z_2)\mapsto(z_1+z_2,z_1z_2)$. In our attitude and important tool will be played by the tetrablock $\mathbb E$ and the possibility of embedding the 
symmetrised bidisc in $\mathbb E$ so we start this section by recalling basic properties of this domain.

\subsection{Geometry of the tetrablock}
Recall that the \textit{ tetrablock } is a domain in $\mathbb C^3$ denoted by $\mathbb E$ and given by the formula
$$\mathbb E=\{(x_1, x_2, x_3)\in \mathbb C^3: |x_1 -\bar x_2 x_3| + |x_2-\bar x_1 x_3| + |x_3|^2<1\}.$$
It is well known that $\mathbb E$ may be given as the image of the Cartan domain of the first type $\mathcal R_I$ under the mapping $\pi:\mathbb C^{2\times 2}\to \mathbb C^3$ $$z=(z_{i,j})\mapsto (z_{11}, z_{22}, \det z).$$
Basic properties of the tetrablock that we use may be found in \cite{Abo 2007}, \cite{AWY}, \cite{Edi-Kos-Zwo}, \cite{Kos 2009}, \cite{You 2007} and \cite{Zwo 2013}.

What is more, $\pi$ restricted to $\mathcal R_{II}$ maps properly $\mathcal R_{II}$ onto $\mathbb E$ and the locus set of this proper mapping $\{(x_1,x_2, x_3)\in \mathbb D^3:\ x_1 x_2=x_3\}$ coincides with the royal variety of $\mathbb E$ which we denote by $\mathcal T$.

In the sequel we shall make use of the structure of the group of automorphisms of $\mathbb E$ and its connection with the group of automorphisms of $\mathcal R_{II}$ (see \cite{Kos 2009}). Recall that the group of automorphisms of $\mathcal R_{II}$ is generated by the linear isomorphisms $$L_U:x\mapsto U x U^t,$$ where $U$ is a unitary matrix, and the mappings 
\begin{equation}\label{eq:autR}
\Phi_a: x\mapsto (1-aa^*)^{-\frac{1}{2}} (x-a)(1-a^* x)^{-1} (1-a^* a)^{\frac{1}{2}},
\end{equation}
where $a\in \mathcal R_{II}$ (see \cite[Section 3]{Har} and \cite{Hua}). Note that $\Phi_a(0)=-a$ and $\Phi_a(a)=0$. Moreover, $\Phi_a^{-1}=\Phi_{-a}$.

It follows from \cite{Kos 2009} (see also \cite{You 2007}) that any automorphism $\psi$ of the tetrablock is of the form $$\psi\circ \pi= \pi \circ \Phi$$ for some automorphism $\Phi$ of $\mathcal R_{II}$. Moreover, such an automorphism $\Phi$ is generated either by $\Phi_A$, where $A=\left(\begin{array}{cc} a_1 & 0\\ 0 & a_2\end{array}\right)$, $a_1,a_2\in \mathbb D$, or by $L_U$, where $U=\left(\begin{array}{cc} \omega_1 & 0\\ 0 & \omega_2\end{array}\right)$ or $U=\left(\begin{array}{cc} 0 & \omega_1 \\ \omega_2 & 0\end{array}\right)$, where $\omega_1,\omega_2\in \mathbb T$.

Some simple computations lead to the description of the group of automorphism of $\mathbb E$ (see also \cite{You 2007}). More precisely, the automorphism $\Phi_A$ of $\mathcal R_{II}$, where $A=\left(\begin{array}{cc} a & 0\\ 0 & b\end{array}\right)$, $a,b\in \mathbb D$, induces the automorphism $\psi$ of $\mathbb E$ given by
\begin{multline}\label{autE}\psi(x_1, x_2, x_3)=\\ \left( \frac{x_1 - a -\bar b x_3+ a \bar b x_2}{1- \bar a x_1 - \bar b x_2 +\bar a \bar b x_3}, \frac{x_2 - b -\bar a x_3+ \bar a b x_1}{1- \bar a x_1 - \bar b x_2 +\bar a \bar b x_3}, \frac{x_3 - a x_2 -b x_1 + a b}{1- \bar a x_1 - \bar b x_2 +\bar a \bar b x_3}\right).\end{multline}

Moreover, $L_U$, where $U$ is a unitary diagonal or anti-diagonal matrix, induces automorphisms of $\mathbb E$ generated by $$(x_1,x_2, x_3)\mapsto (\omega x_1, \eta x_2, \omega \eta x_3),$$ where $\omega, \eta\in \mathbb T$, and by $$(x_1,x_2, x_3)\mapsto (x_2, x_1, x_3).$$

In the sequel we shall make use of the following
\begin{remark}\label{bE} The point $\pi(x)$, where $x\in\partial \mathcal R_I$, lies in the topological boundary of $\mathbb E$ if and only if $|x_{12}|=|x_{21}|$. This is an immediate consequence of Lemma~9 in \cite{Edi-Kos-Zwo} and the properness of $\pi|_{\mathcal R_{II}}:\mathcal R_{II} \to \mathbb E$.

In particular, for $x\in \overline{\mathcal R_I}$ such that $\pi(x)\in \partial \mathbb E$ the following statement is a consequence of the description of the Shilov boundary of the tetrablock (see e.g. \cite[Remark~13]{Kos 2009}):
$$x\in \partial_s \mathcal R_I\quad \text{if and only if}\quad \pi(x)\in \partial_s \mathbb E.$$
\end{remark}

\subsection{(Weak) $m$-extremals in the symmetrised bidisc intersecting $\Sigma$}

At this point we may outline the idea of study of weak $m$-extremals in the symmetrised bidisc. First we concentrate on a more difficult problem of description of $m$-extremals intersecting the royal variety $\Sigma$ (definition is given below).

Since $\pi:\mathcal R_{II}\to \mathbb E$ is proper, $\pi$ restricted to $\mathcal R_{II}\setminus \pi^{-1}(\mathcal T)$ is a holomorphic covering of $\mathbb E\setminus \mathcal T$. Therefore, an analytic disc in $\mathbb E$ omitting its royal variety may be lifted to an analytic disc in the classical Cartan domain of the second type. Of course this does not have to be true if the analytic disc intersects the royal variety of the tetrablock. But then we may lift it to an analytic disc in $\mathcal R_I$ or, up to a composition with an automorphism of the tetrablock, it is of the form $\lambda\mapsto (0, 0, a(\lambda))$ for some $a\in \mathcal O(\mathbb D, \mathbb D)$ (see \cite[Lemma~7]{Edi-Kos-Zwo}).

At this point it would be reasonable to recall the close relationship between the tetrablock and the symmetrised bidisc $\mathbb G_2$. First recall that {\it the symmetrised bidisc} (denoted by $\mathbb G_2$) is a domain which may be defined as the image under the (proper and holomorphic) mapping $(z_1,z_2)\mapsto(z_1+z_2, z_1 z_2)$ of the bidisc $\mathbb D^2$ or equivalently
\begin{equation}
\mathbb G_2:=\{(s,p)\in\mathbb C^2:|s-\overline{s}p|+|p|^2<1\}.
\end{equation}
A similar role to that of $\mathcal T$ (in $\mathbb E$) is played by the royal variety of the symmetrised bidisc, i.e. the set $\Sigma:=\{(2\lambda,\lambda^2):\lambda\in\mathbb D\}$.
We have a natural embedding
$$\iota:\mathbb G_2\ni(s,p)\mapsto (s/2, s/2, p)\in \mathbb E.$$
On the other hand the mapping
$$
p:\mathbb E\ni x\mapsto (x_1+x_2,x_3)\in\mathbb G_2
$$
satisfies the equality $p\circ \iota=\operatorname{id}_{\mathbb G_2}$. We shall use these facts extensively.

\vskip3mm

Consequently, if $f:\mathbb D\to \mathbb G_2$ is an analytic disc, then $(f_1/2, f_1/2, f_2)$ is an analytic disc in the tetrablock $\mathbb E$. Therefore, we may lift it to an analytic disc in $\overline{\mathcal R_I}$.
It is much more comfortable and natural to lift $f$ to an analytic disc in the classical Cartan domain of the second type

To do it let us denote the action permuting columns of a given matrix $a\in \mathbb C^{2\times 2}$ by ${}^\tau a$. These observations lead us to Lemma~\ref{clem} enabling us to transport the problem of the study of ($m$-extremals) analytic discs in the symmetrised bidisc to that in the Cartan domain of the second type. Similar ideas appear also in \cite{Agl}. However, it is quite convenient to pass to the tetrablock. The geometry of this domain and its properties described in \cite{AWY} turn out to be very helpful (e.g. we shall show that properness of weak $m$-extremals in the symmetrised bidisc is a simple consequence of the fact that the tetrablock may be given as the image of the Cartan domain under a proper holomorphic mapping).

\begin{lemma}[see also \cite{Agl}]\label{clem} Let $\varphi:\mathbb D\to \mathbb G_2$ be an analytic disc. Then there is an analytic disc $f:\mathbb D\to \overline{\mathcal R_{II}}$ such that $$\left(\frac{\varphi_1}{2}, \frac{\varphi_1}{2}, \varphi_2\right) = \pi\circ  {}^\tau f.$$
\end{lemma}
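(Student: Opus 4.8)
The plan is to push $\varphi$ into the tetrablock through the embedding $\iota$, lift the resulting disc through $\pi$ to the Cartan domain $\mathcal R_I$ of the first type, and then observe that the column swap ${}^\tau$ turns that lift into a symmetric contraction, i.e. into an analytic disc in $\overline{\mathcal R_{II}}$.

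First I would set $x:=\iota\circ\varphi=(\varphi_1/2,\varphi_1/2,\varphi_2)$, which is an analytic disc in $\mathbb E$. As observed just before the statement (see also \cite{Edi-Kos-Zwo}), such a disc can be lifted through $\pi$ to an analytic disc $g:\mathbb D\to\overline{\mathcal R_I}$, that is, $\pi\circ g=x$. Writing $g=(g_{ij})_{i,j=1,2}$ and using $\pi(g)=(g_{11},g_{22},\det g)$, comparison of coordinates gives
\[
g_{11}=g_{22}=\frac{\varphi_1}{2},\qquad g_{12}g_{21}=g_{11}g_{22}-\det g=\frac{\varphi_1^2}{4}-\varphi_2 .
\]
The key observation is that the diagonal entries of $g$ coincide. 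Hence $f:={}^\tau g$, whose entries are $f_{11}=g_{12}$, $f_{12}=f_{21}=g_{11}=\varphi_1/2$ and $f_{22}=g_{21}$, is a symmetric matrix. Moreover ${}^\tau$ is right multiplication by the unitary permutation matrix $\left(\begin{smallmatrix}0&1\\1&0\end{smallmatrix}\right)$, so $\|f\|=\|g\|\le 1$ on $\mathbb D$ and $f:\mathbb D\to\overline{\mathcal R_{II}}$ is an analytic disc. Finally ${}^\tau f={}^\tau({}^\tau g)=g$, whence $\pi\circ{}^\tau f=\pi\circ g=(\varphi_1/2,\varphi_1/2,\varphi_2)$, which is precisely the required identity.

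The only delicate point is the existence of the lift $g$, and it is worth explaining why the twist ${}^\tau$ is what makes things work. A direct symmetric lift $\pi\circ f=x$ would force $f_{12}^2=\varphi_1^2/4-\varphi_2$, i.e. it would demand a holomorphic square root of $\varphi_1^2/4-\varphi_2$ on $\mathbb D$, which fails as soon as $\varphi$ meets the royal variety $\Sigma$ to odd order at some point. Passing to ${}^\tau f$ replaces this square root by the plain product $f_{11}f_{22}$, for which an $\overline{\mathcal R_I}$-lift $g$ supplies at once a factorization and — through unitarity of the column permutation — the contractivity bound. So everything reduces to the existence of an $\overline{\mathcal R_I}$-lift of $x$ even when $\varphi(\mathbb D)\cap\Sigma\neq\emptyset$; this is exactly the lifting result of \cite{Edi-Kos-Zwo} recalled above (in the remaining case $x$ is, up to an automorphism of $\mathbb E$, of the form $\lambda\mapsto(0,0,a(\lambda))$ with $a\in\OO(\mathbb D,\mathbb D)$, which is still lifted, e.g. by $\lambda\mapsto\left(\begin{smallmatrix}0&-a(\lambda)\\1&0\end{smallmatrix}\right)$, into $\overline{\mathcal R_I}$), and the lemma follows.
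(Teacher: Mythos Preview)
Your proof is correct and follows essentially the same route as the paper: both embed $\varphi$ into $\mathbb E$ via $\iota$ and then invoke the lifting result of \cite[Lemma~7]{Edi-Kos-Zwo}. The paper's proof consists of exactly those two steps and nothing more; you additionally unpack \emph{why} the lift lands in $\overline{\mathcal R_{II}}$ rather than merely in $\overline{\mathcal R_I}$, namely that the equality $g_{11}=g_{22}=\varphi_1/2$ forces ${}^\tau g$ to be symmetric, and that the column swap, being right multiplication by a unitary, preserves the operator norm. This explanation (together with your remark on why a direct symmetric lift would run into a square-root obstruction at points of $\Sigma$) is a genuine clarification of what the paper leaves implicit in its one-line citation. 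One small point: in your parenthetical ``remaining case'' you exhibit a lift of the \emph{automorphed} disc $(0,0,a)$, not of $x$ itself; to close this you need that the relevant automorphisms $\Phi_a$, $L_U$ of $\mathcal R_{II}$ extend to automorphisms of $\mathcal R_I$ (they do, by the same formulas), so that the lift can be pulled back.
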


\begin{proof}[Proof of Lemma~\ref{clem}]
The result may be deduced from \cite[Theorem~1.3]{Agl}. For the convenience of the Reader we shall present a simple proof involving geometry of the tetrablock. This is also justified by the fact that we shall use more properties and relations between the tetrablock and the Cartan classical domains in the sequel.

To prove the assertion it suffices to observe that $\iota\circ \varphi$ is an analytic disc in $\mathbb E$ and then apply \cite[Lemma~7]{Edi-Kos-Zwo}.
\end{proof}

Of course, the result presented above is the most interesting in the case when $\varphi(\mathbb D)$ intersects $\Sigma$. Note that it is trivial if $\varphi(\mathbb D)$ is contained in $\Sigma$, because then $\varphi=(\varphi_1, \frac{\varphi_1^2}{4}),$ where $\varphi_1\in \mathcal O(\mathbb D, \mathbb D)$.

\begin{remark}\label{LL}
Importance of Lemma~\ref{clem} is a consequence of the following simple observations:
\begin{itemize} \item $\varphi:\mathbb D\to \mathbb G_2$ is a (weak) $m$-extremal in $\mathbb G_2$ if and only if $(\frac{\varphi_1}{2}, \frac{\varphi_1}{2}, \varphi_2)$ is a (weak) $m$-extremal in the tetrablock,
\item if $\pi\circ {}^\tau f :\mathbb D\to \mathbb E$ is a (weak) $m$-extremal, where $f \in \mathcal O(\mathbb D, \mathcal R_{II})$, then $f$ is $m$-extremal in $\mathcal R_{II}$.
\end{itemize}
\end{remark}

Note that a lifting map $f$ appearing in Lemma~\ref{clem} may be chosen in a specific way:
\begin{lemma}\label{cclem} Let $\varphi:\mathbb D\to \mathbb G_2$ be an analytic disc. Then there is an analytic disc $f:\mathbb D\to \overline{\mathcal R_{II}}$ such that $$\left(\frac{\varphi_1}{2}, \frac{\varphi_1}{2}, \varphi_2 \right) = \pi\circ  {}^\tau f$$ and either
\begin{itemize}
\item $f$ lies in the boundary of $\mathcal R_I$ and then $\varphi$ is, up to a composition with an automorphism of the symmetrised bidisc, of the form $(0, \varphi_2)$, or
\item $f$ is an analytic disc in $\mathcal R_{II}$ such that $|f_{11}^*|=|f_{22}^*|$ a.e. on $\mathbb T$ and $f_{11}$ vanishes nowhere on $\mathbb D$.
\end{itemize}

Moreover, if $\varphi$ is a weak $m$-extremal, then $f$ is an $m$-extremal in $\mathcal R_{II}$.
\end{lemma}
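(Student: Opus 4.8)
The plan is to build $f$ by combining Lemma~\ref{clem} with the analysis of liftings to $\overline{\mathcal R_I}$ already recorded in this section, and then normalize. First I would start from Lemma~\ref{clem}, which produces an analytic disc $f:\mathbb D\to\overline{\mathcal R_{II}}$ with $(\varphi_1/2,\varphi_1/2,\varphi_2)=\pi\circ{}^\tau f$. Consider the lift to $\overline{\mathcal R_I}$; by the discussion preceding Lemma~\ref{clem} (and \cite[Lemma~7]{Edi-Kos-Zwo}), either the lift has image in $\partial\mathcal R_I$, or it is an analytic disc in $\mathcal R_I$ whose image under ${}^\tau(\cdot)$ lies in $\mathcal R_{II}$. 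In the first case, Remark~\ref{bE} tells us that $\pi(x)\in\partial\mathbb E$ forces $|x_{12}|=|x_{21}|$ on the boundary disc, and tracing through $\iota$ one checks the image of $\iota\circ\varphi$ lies in a face of $\partial\mathbb E$ of the form where $x_1=x_2$ is forced to a constant after an automorphism; applying the automorphisms of $\mathbb G_2$ described in Section~4.1 (transported from those of $\mathbb E$) brings $\varphi$ to the form $(0,\varphi_2)$. That is the first bullet.

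In the second case I would work to arrange the normalization $|f_{11}^*|=|f_{22}^*|$ a.e. on $\mathbb T$ and $f_{11}$ zero-free. The condition $|f_{11}^*|=|f_{22}^*|$ a.e. should come from the fact that $\pi\circ{}^\tau f$ maps $\mathbb T$ into $\partial\mathbb E$ (since $\iota\circ\varphi$ does, because a weak $m$-extremal in $\mathbb G_2$ is one in $\mathbb E$ by Remark~\ref{LL}, and by Proposition~\ref{prop:proper-extremals} applied in $\mathcal R_{II}$ — via the second bullet of Remark~\ref{LL} — $f$ is proper, hence $f^*$ lands in $\partial\mathcal R_{II}$ a.e.): indeed, by Remark~\ref{bE}, $\pi(x)\in\partial\mathbb E$ with $x\in\partial\mathcal R_I$ is equivalent to $|x_{12}|=|x_{21}|$, and swapping columns interchanges the roles so that for ${}^\tau f$ one gets the diagonal entries equal in modulus. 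To kill the zeros of $f_{11}$: if $f_{11}$ had a zero, factor out a Blaschke product, i.e. write $f_{11}=B\tilde f_{11}$, and absorb $B$ by composing with a suitable automorphism $L_U$ with $U$ anti-diagonal (swapping rows/columns) and an automorphism $\Phi_A$ of $\mathcal R_{II}$ — here I would use that $\pi$ intertwines $\Aut(\mathcal R_{II})$ and $\Aut(\mathbb E)$, so the change is harmless at the level of $\varphi$ up to $\Aut(\mathbb G_2)$. The precise bookkeeping is where care is needed: one must check that the automorphisms used do not reintroduce zeros or break $|f_{11}^*|=|f_{22}^*|$, and that they project correctly to automorphisms of $\mathbb G_2$ fixing the shape $(\varphi_1/2,\varphi_1/2,\varphi_2)$.

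For the final sentence — if $\varphi$ is a weak $m$-extremal then $f$ is an $m$-extremal in $\mathcal R_{II}$ — I would simply invoke Remark~\ref{LL}: $\varphi$ weak $m$-extremal in $\mathbb G_2$ $\iff$ $(\varphi_1/2,\varphi_1/2,\varphi_2)=\pi\circ{}^\tau f$ weak $m$-extremal in $\mathbb E$, and the second bullet of Remark~\ref{LL} then gives that $f$ (equivalently ${}^\tau f$, since column-swap is an automorphism of $\mathcal R_{II}$) is $m$-extremal in $\mathcal R_{II}$. Together with Proposition~\ref{prop}, one could even upgrade ``weak'' to genuine $m$-extremal in $\mathcal R_{II}$, which is exactly what the statement claims.

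The main obstacle I anticipate is the normalization step in the second bullet: turning the raw lift $f$ from Lemma~\ref{clem} into one with $f_{11}$ zero-free and $|f_{11}^*|=|f_{22}^*|$ simultaneously, while keeping track of which automorphisms of $\mathcal R_{II}$ (diagonal/anti-diagonal $L_U$, and $\Phi_A$ with $A$ diagonal) descend to automorphisms of $\mathbb G_2$ and do not disturb the form $(\varphi_1/2,\varphi_1/2,\varphi_2)$. Establishing that the boundary condition $|f_{11}^*|=|f_{22}^*|$ persists after dividing out a Blaschke factor and recomposing — rather than the mere existence of \emph{some} lift with that property — is the delicate point; everything else is either an application of \cite[Lemma~7]{Edi-Kos-Zwo}, Remark~\ref{bE}, or Remark~\ref{LL}.
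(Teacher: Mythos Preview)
Your normalization step for the second bullet has two genuine problems. First, the lemma's main assertion carries \emph{no} extremality hypothesis: it says that \emph{every} analytic disc $\varphi:\DD\to\GG_2$ admits a lift $f$ with $|f_{11}^*|=|f_{22}^*|$ a.e.\ and $f_{11}$ zero-free. Your route to the boundary equality runs through properness of $f$, obtained via Remark~\ref{LL} and Proposition~\ref{prop:proper-extremals}, and those require $\varphi$ to be a weak $m$-extremal. (Even under that extra hypothesis the argument does not close: to apply Remark~\ref{bE} you still need $\pi({}^\tau f^*)=\iota\circ\varphi^*\in\partial\EE$, i.e.\ $\varphi$ proper in $\GG_2$; that is Lemma~\ref{lemprop}, whose proof in the paper \emph{uses} the present lemma, so the route is circular.) Second, your plan to kill the zeros of $f_{11}$ by composing with automorphisms $L_U$ or $\Phi_A$ of $\mathcal R_{II}$ cannot work: any such automorphism that acts nontrivially on $\EE$ changes $\pi\circ{}^\tau f$ to an automorphic image of $\iota\circ\varphi$, whereas the second bullet demands the exact equality $\pi\circ{}^\tau f=\iota\circ\varphi$ for the \emph{original} $\varphi$. (Incidentally, column-swap ${}^\tau$ is not an automorphism of $\mathcal R_{II}$: it takes a symmetric matrix to a symmetric one only when the diagonal entries coincide.)

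The paper's device avoids both issues by exploiting a freedom in the lift that you have not used. Writing $g={}^\tau f$, the value $\pi(g)=(g_{11},g_{22},\det g)$ depends on the off-diagonal entries only through the product $g_{12}g_{21}$, so one may replace $(g_{12},g_{21})$ by any pair with the same product without altering $\pi\circ g$ at all. Factor $g_{12}g_{21}=bh$ with $b$ a Blaschke product and $h$ zero-free on $\DD$, and set the new off-diagonals to $(\sqrt{h},\,b\sqrt{h})$. Swapping columns back, the new $f$ has $f_{11}=\sqrt{h}$ zero-free and $|f_{11}^*|=|\sqrt{h}^*|=|b^*\sqrt{h}^*|=|f_{22}^*|$ a.e.\ on $\TT$, since $|b^*|=1$. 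Both normalizations fall out of the construction simultaneously, with no extremality assumption and no automorphism of $\mathcal R_{II}$ needed. The ``moreover'' clause is then handled by Remark~\ref{rem} (which is exactly the statement that redistributing Blaschke factors between the off-diagonal entries preserves $m$-extremality in $\mathcal R_I$) together with Remark~\ref{LL}.
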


To prove the above lemma we need the following observation:
\begin{remark}\label{rem} Suppose that $f:\mathbb D\to \mathbb E$ is a (weak) $m$-extremal in the tetrablock such that $f(0)=0$ and both $f_1$ and $f_2$ are not identically equal to $0$. Suppose additionally that $f=\pi\circ G$, where $G:\mathbb D\to \mathcal R_{I}$ is of the form $G=\left(\begin{array}{cc} f_1 & B_1 g_1\\ B_2 g_2 & f_2\end{array} \right)$, and $B_1$ and $B_2$ are Blaschke products.

Then, of course, the mapping $G$ is an $m$-extremal in $\mathcal R_{I}$. Comparing non-tangential limits we also see that the mapping $H:=\left(\begin{array}{cc} f_1 & g_1\\ B_1 B_2 g_2 & f_2\end{array} \right)$ maps $\mathbb D$ into $\overline{\mathcal R_{I}}$. Note also that $g_1(0)$ lies in the unit disc. Actually, otherwise $g_1$ would be a unimodular constant and therefore $f_1\equiv f_2 \equiv 0$ (as tangential limits of $f_1$ and $f_2$ would vanish on $\mathbb T$). Thus $H$ is an analytic disc in $\mathcal R_{I}$. Since $\pi\circ H=f$, we get that $H=\left(\begin{array}{cc} f_1 & g_1\\ B_1 B_2 g_2 & f_2\end{array} \right)$ is an $m$-extremal in $\mathcal R_{I}$.

\end{remark}

\begin{proof}[Proof of Lemma~\ref{cclem}] The existence of $f$ is guaranteed by Lemma~\ref{clem}.

If $\varphi$ does not intersect $\Sigma,$ then one may assume that ${}^\tau f$ is an analytic disc in $\mathcal R_{II}$ and the assertion is clear. So suppose that $\varphi(\mathbb D)\cap \Sigma\neq \emptyset$. Composing $\varphi$ with an automorphism of $\mathbb G_2$ we may assume that $\varphi(0)=0$.

If $f$ is an analytic disc contained in $\partial \mathcal R_I$, then the assertion follows from Lemma~7 in \cite{Edi-Kos-Zwo}.
In the other case let us denote $g=(g_{ij})={}^\tau f$. Note that if $g_{12}g_{21}\equiv 0$ then we may assume that $g_{12}=g_{21}=0$ and we are done.

If $g_{12}g_{21}\not\equiv 0$, then one may find a Blaschke product such that $g_{12}g_{21}= b h$, where $h:\mathbb D\to \overline{\mathbb D}$ does not vanish. Then $\tilde g$ given by the formula $$\tilde g=\left(\begin{array}{cc} g_{11} & \sqrt{h}\\ b \sqrt{h} & g_{22}\end{array} \right)$$ maps $\DD$ into $\mathcal R_{II}$ and satisfies $\pi\circ \tilde g = \varphi$.

The second part of the lemma is a direct consequence of Remark~\ref{rem}.
\end{proof}

The above lemma suggests that $m$-extremals $\varphi$ such that
\begin{equation}
 \text{(\dag) $\varphi$ is, up to $\Aut(\mathbb G_2)$, of the form $(0, \varphi_2)$}
\end{equation}
should be considered separately.

Using Lemma~\ref{clem} we get the following

\begin{theorem}\label{lem1} Let $\varphi:\mathbb D\to \mathbb G_2$ be a weak $m$-extremal. Assume that $\varphi$ is not of the form (\dag). Then there is $1\leq n\leq m-1$ and there are $a_1,\ldots,a_n\in \mathcal R_{II}$, a holomorphic function $Z:\mathbb D\to \mathbb D$ fixing the origin and unitary matrix $U$ such that  $$\left(\frac{\varphi_1(\lambda)}{2}, \frac{\varphi_1(\lambda)}{2}, \varphi_2(\lambda)\right)= \pi({}^\tau \Phi_{a_1}(\lambda\Phi_{a_2} (\cdots \lambda \Phi_{a_n}( U \left( \begin{array}{cc} \lambda & 0 \\ 0 & Z(\lambda) \end{array}\right) U^t) )), $$ for $\lambda \in \mathbb D.$

We may additionally assume that the lifting disc $$f(\lambda):=\Phi_{a_1}(\lambda\Phi_{a_2} (\cdots \lambda \Phi_{a_n}( U \left( \begin{array}{cc} \lambda & 0 \\ 0 & Z(\lambda) \end{array}\right) U^t) ),\quad \lambda\in \DD$$ is chosen so that $f_{11}$ does not vanish on $\mathbb D$.
\end{theorem}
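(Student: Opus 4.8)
The plan is to transport the problem to the classical Cartan domain $\mathcal R_{II}$ via Lemma~\ref{cclem}, to establish there an intrinsic ``chain'' normal form for $m$-extremals by induction on $m$, and then to carry it back to $\mathbb G_2$ through $\pi$ and the column swap ${}^\tau(\cdot)$. First I would apply Lemma~\ref{cclem}: since $\varphi$ is a weak $m$-extremal that is not of the form (\dag), the first (boundary) alternative of that lemma is excluded, because it would force $\varphi$ to be of the form (\dag); hence we are in the second alternative and obtain an analytic disc $f:\mathbb D\to\mathcal R_{II}$ with $\left(\frac{\varphi_1}{2},\frac{\varphi_1}{2},\varphi_2\right)=\pi\circ{}^\tau f$, with $f_{11}$ vanishing nowhere on $\mathbb D$, and with $f$ an $m$-extremal in $\mathcal R_{II}$.

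It therefore suffices to prove the following intrinsic statement and then substitute $g=f$: \emph{every non-constant $m$-extremal $g:\mathbb D\to\mathcal R_{II}$, $m\geq2$, can be written in the form}
\[
g(\lambda)=\Phi_{a_1}\!\left(\lambda\,\Phi_{a_2}\!\left(\cdots\lambda\,\Phi_{a_n}\!\left(U\left(\begin{array}{cc}\lambda&0\\0&Z(\lambda)\end{array}\right)U^{t}\right)\right)\right)
\]
\emph{for some $1\leq n\leq m-1$, some $a_1,\dots,a_n\in\mathcal R_{II}$, a unitary matrix $U$, and $Z\in\mathcal O(\mathbb D,\mathbb D)$ with $Z(0)=0$.} Indeed, plugging this representation of $g=f$ into $\left(\frac{\varphi_1}{2},\frac{\varphi_1}{2},\varphi_2\right)=\pi\circ{}^\tau f$ gives the displayed identity of the theorem, while the additional requirement that $f_{11}$ does not vanish on $\mathbb D$ was already secured by Lemma~\ref{cclem}.

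The intrinsic statement I would prove by induction on $m$. Set $a_1:=-g(0)\in\mathcal R_{II}$ and $\tilde g:=\Phi_{-a_1}\circ g$ (so $g=\Phi_{a_1}\circ\tilde g$, using $\Phi_{-a_1}^{-1}=\Phi_{a_1}$); then $\tilde g$ is an $m$-extremal with $\tilde g(0)=0$, so we may factor $\tilde g(\lambda)=\lambda\psi(\lambda)$, where by Remark~\ref{remark:balanced-extremals} (the convex domain $\mathcal R_{II}$ being pseudoconvex) $\psi$ is an analytic disc in $\overline{\mathcal R_{II}}$. If $\mu_{\mathcal R_{II}}\circ\psi\equiv1$, then $\tilde g$ is a weak $2$-extremal by Remark~\ref{remark:balanced-extremals}, hence a complex geodesic through the origin by the Lempert theorem, and the standard description of complex geodesics of $\mathcal R_{II}$ through $0$ (see e.g.\ \cite{Har,Hua}; alternatively, pair $\tilde g$ with a Carath\'eodory-extremal left inverse of the form $Z\mapsto v^{t}Zv$, $\|v\|=1$, and combine the resulting Cauchy--Schwarz equality with the symmetry $\tilde g=\tilde g^{t}$) gives $\tilde g(\lambda)=U\left(\begin{array}{cc}\lambda&0\\0&Z(\lambda)\end{array}\right)U^{t}$ with $Z\in\mathcal O(\mathbb D,\mathbb D)$, $Z(0)=0$, so that $g=\Phi_{a_1}\!\left(U\left(\begin{array}{cc}\lambda&0\\0&Z(\lambda)\end{array}\right)U^{t}\right)$ and $n=1$ works. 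If instead $\mu_{\mathcal R_{II}}\circ\psi<1$, then $\psi$ is an analytic disc in $\mathcal R_{II}$; note that $m\geq3$ in this case, since for $m=2$ the disc $\tilde g$ would be a complex geodesic through $0$, forcing $\mu_{\mathcal R_{II}}\circ\psi\equiv1$. Now for any distinct nonzero $\lambda_1,\dots,\lambda_{m-1}$ the $m$-extremal $\tilde g$ is in particular a weak $m$-extremal for $\lambda_1,\dots,\lambda_{m-1},0$, so by Remark~\ref{remark:balanced-extremals} $\psi$ is a weak $(m-1)$-extremal for $\lambda_1,\dots,\lambda_{m-1}$; hence $\psi$ is a weak $(m-1)$-extremal, hence an $(m-1)$-extremal by Proposition~\ref{prop}, and it is non-constant (constant discs are never extremal). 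Applying the inductive hypothesis to $\psi$ (which thus has at most $m-2$ factors) exhibits $g=\Phi_{a_1}(\lambda\psi(\lambda))$ in the required form with $2\leq n\leq m-1$.

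The main obstacle I expect is the step ``$\psi$ is an $(m-1)$-extremal'': Remark~\ref{remark:balanced-extremals} only records weak extremality with respect to a prescribed node system, so one must use the full $m$-extremality of $\tilde g$ to obtain weak $(m-1)$-extremality of $\psi$ for enough node systems, and then invoke Proposition~\ref{prop} --- the nontrivial equivalence of weak $m$-extremality and $m$-extremality in classical Cartan domains --- in order to run the induction. One must also verify that the alternatives $\mu_{\mathcal R_{II}}\circ\psi\equiv1$ versus $\mu_{\mathcal R_{II}}\circ\psi<1$ match precisely the base step (``$\tilde g$ is a complex geodesic'') versus the inductive step (``$\tilde g$ picks up one more factor'', necessarily with $m\geq3$), and that no degeneracy is overlooked: $\psi$ is never constant, and $m=2$ always falls into the first alternative, so the induction terminates correctly with $1\leq n\leq m-1$.
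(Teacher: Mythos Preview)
Your argument is correct and follows the same route as the paper: lift to $\mathcal R_{II}$ via Lemma~\ref{cclem}, upgrade weak extremality to extremality via Proposition~\ref{prop}, and then peel off factors $\Phi_a(\lambda\,\cdot)$ using Remark~\ref{remark:balanced-extremals} until a complex geodesic through the origin remains (the paper organizes this by fixing the minimal $n$ with $f$ $(n+1)$-extremal and iterating, rather than by induction on $m$, but the mechanism is identical). The normal form for complex geodesics through $0$ in $\mathcal R_{II}$ that you invoke is precisely Lemma~\ref{ab} of the paper (due to Abate), so you should cite that rather than \cite{Har,Hua}.
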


Before we start the proof of Theorem~\ref{lem1} let us recall the description of complex geodesics in $\mathcal R_{II}$ due to Abate. For the convenience of the Reader we shall present  in the end of the paper a very elementary outline of the proof of this fact.
\begin{lemma}[see \cite{Abate}, Proposition~2.1]\label{ab}
Let $f:\mathbb D\to \mathcal R_{II}$ be a complex geodesic in $\mathcal R_{II}$ such that $f(0)=0$. Then, there are a unitary matrix $U$ and a holomorphic function $Z:\mathbb D\to \mathbb D$ vanishing at the origin such that $$f(\lambda) = U \left( \begin{array}{cc} \lambda & 0 \\ 0 & Z(\lambda) \end{array}\right) U^t ,\quad \lambda \in \DD.$$
\end{lemma}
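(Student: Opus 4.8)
The plan is to prove Lemma~\ref{ab} by reducing the classification of geodesics in $\mathcal{R}_{II}$ to elementary facts about the unit disc via a judicious choice of left inverse, and then unraveling what the extremality forces on the matrix entries of $f$. Recall that $\mathcal{R}_{II}$ is the set of symmetric $2\times 2$ matrices $z$ with $\|z\|<1$ (operator norm), it is convex, balanced, and homogeneous, so the Lempert theorem applies. A complex geodesic $f$ with $f(0)=0$ is, by the balanced-domain remarks (Remark~\ref{remark:balanced-extremals}), of the form $f(\lambda)=\lambda\psi(\lambda)$ with $\mu_{\mathcal{R}_{II}}\circ f(\lambda)=|\lambda|$; equivalently $\|f(\lambda)\|=|\lambda|$ for all $\lambda$. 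Thus the main geometric input is: a holomorphic map $f:\mathbb{D}\to\mathbb{C}^{2\times 2}_{\mathrm{sym}}$ with $f(0)=0$ and operator norm equal to $|\lambda|$ pointwise.

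The first step is to diagonalize $f$ at a single generic point and use the freedom $f\mapsto U f U^t$ (which is an automorphism of $\mathcal{R}_{II}$ fixing $0$) to put $f$ into a normalized position, then show the off-diagonal entries must vanish identically. Concretely, write $f(\lambda)=\begin{pmatrix} a(\lambda) & b(\lambda)\\ b(\lambda) & d(\lambda)\end{pmatrix}$ with $a,b,d\in\mathcal{O}(\mathbb{D})$ vanishing at $0$. The condition $\|f(\lambda)\|=|\lambda|$ says that the largest singular value of $f(\lambda)$ equals $|\lambda|$, i.e. $f(\lambda)/\lambda$ has operator norm $\le 1$ on $\mathbb{D}$ and norm exactly $1$ on $\mathbb{T}$ in the radial-limit sense. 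I would exploit the identity (for symmetric matrices) relating $\|f\|$ to $\mathrm{tr}(f^*f)$ and $|\det f|$: the squared singular values are the roots of $t^2 - \mathrm{tr}(f^*f)\,t + |\det f|^2$, so $\|f(\lambda)\|^2=|\lambda|^2$ together with the fact that $\|f\|$ is the \emph{larger} root forces, after dividing by $|\lambda|^2$, that $\mathrm{tr}((f/\lambda)^*(f/\lambda)) - |\lambda|^{-2}|\det f|^2 = 1$ wherever $\|f\|=|\lambda|$. The plan is to turn this into a statement that each matrix entry of $\psi=f/\lambda$, suitably rotated by $U$, is bounded by $1$ and that the diagonalizing unitary can be taken constant — because a holomorphic family of symmetric matrices with constant operator norm and a persistent maximal singular value admits a holomorphic (hence, by the maximum principle on the subharmonic "defect", constant) choice of singular vector.

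The cleanest route, which I would pursue, is via the left inverse: by Lempert there is $F\in\mathcal{O}(\mathcal{R}_{II},\mathbb{D})$ with $F\circ f=\mathrm{id}_{\mathbb{D}}$, and one knows (e.g. from the description of the Carathéodory extremals of $\mathcal{R}_{II}$, or from the general theory of complex geodesics in convex balanced domains) that $F$ is linear, $F(z)=\mathrm{tr}(Az)$ for a suitable fixed matrix $A$ with $\|A\|_{*}$-type normalization. Then $\mathrm{tr}(A f(\lambda))=\lambda$, and combining this linear equation with $\|f(\lambda)\|=|\lambda|$ pins down $f$: writing $A$ in its singular value decomposition and absorbing the unitaries into the $U(\cdot)U^t$ normalization, the equality case in the inequality $|\mathrm{tr}(Af)|\le \|A\|_*\|f\|$ (nuclear vs operator norm duality) forces $f(\lambda)=\lambda\,v v^t$ up to the lower-dimensional complement, giving the first diagonal entry $\lambda$; the remaining $1$-dimensional slack is an arbitrary Schur function $Z$ vanishing at $0$ in the orthogonal slot, and symmetry of the SVD of a symmetric matrix ($A=U\Sigma U^t$, so the left and right singular bases coincide up to conjugation) is exactly what lets us use a single unitary $U$ on both sides.

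I expect the main obstacle to be the rigidity step: showing the diagonalizing unitary $U$ may be chosen \emph{independent of $\lambda$}. The inequality arguments give, for each $\lambda$, a unit vector realizing the operator norm, but a priori it varies. The resolution is a subharmonicity/maximum-principle argument: define $h(\lambda)=\mathrm{tr}((f(\lambda)/\lambda)^*(f(\lambda)/\lambda)) = \|a/\lambda\|^2+2\|b/\lambda\|^2+\|d/\lambda\|^2$ off $0$, extended across $0$; this is subharmonic (sum of moduli squared of holomorphic functions divided by $|\lambda|^2$ — actually $|a/\lambda|^2$ etc. are each subharmonic as $\log$ of... no: $|g|^2$ is subharmonic for holomorphic $g$, and $a/\lambda$ is holomorphic since $a(0)=0$), bounded above by its boundary values, and on $\mathbb{T}$ one of the singular values is $1$ and the other is $|\det f/\lambda|^2\le$ something — equality in the operator-norm constraint at interior points then forces the "smaller singular value" to be \emph{holomorphically} controlled, hence by the open mapping theorem the second singular value is $|Z(\lambda)|^2$ for a holomorphic $Z$, and simultaneously the singular subspaces are locally constant, hence globally constant by connectedness. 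I will write this out as: after the $U(\cdot)U^t$ normalization at $\lambda=\lambda_0$, the entry $b$ must vanish identically (else $\|f(\lambda_0+\text{small})\|>|\lambda_0+\text{small}|$, contradicting the geodesic equation), and once $b\equiv 0$ the matrix is diagonal, $\max(|a|,|d|)=|\lambda|$, so $a(\lambda)=e^{i\theta}\lambda$ (absorb $e^{i\theta}$ into $U$) and $d=:Z$ is an arbitrary holomorphic self-map of $\mathbb{D}$ fixing $0$, which is the claimed normal form. The promised "very elementary outline" at the end of the paper presumably follows exactly this b-vanishing argument, so I would keep the write-up short and defer the full rigidity proof to that appendix.
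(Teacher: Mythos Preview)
Your proposal circles around the right ingredients but never closes the loop, and the route you sketch is considerably heavier than what is actually needed. The paper's appendix argument is a three-line affair once you see the key trick, which you do not use: normalize by SVD at a \emph{single} point $\sigma\in(0,1)$ so that $f(\sigma)=\operatorname{diag}(\lambda_1,\lambda_2)$ with $\lambda_1\ge\lambda_2\ge 0$; observe that the explicit disc $\lambda\mapsto\operatorname{diag}(\lambda,g(\lambda))$ (with $g(0)=0$, $g(\lambda_1)=\lambda_2$) has left inverse $z\mapsto z_{11}$, hence $\tilde k_{\mathcal R_{II}}(0,f(\sigma))=\rho(0,\lambda_1)$; since $f$ is a geodesic this forces $\lambda_1=\sigma$, i.e.\ $f_{11}(\sigma)=\sigma$. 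Now the classical Schwarz lemma applied to $f_{11}$ (which maps $\DD$ to $\DD$ and fixes $0$) gives $f_{11}(\lambda)\equiv\lambda$. Finally, the first row of any matrix in $\overline{\mathcal R_{II}}$ has Euclidean norm $\le 1$, so $|f_{11}|^2+|f_{12}|^2\le 1$; on $\TT$ the first term is already $1$, whence $f_{12}\equiv 0$. That is the entire proof.

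Compared with this, your plan introduces unnecessary machinery (trace duality, nuclear/operator norm pairing, subharmonicity of $\operatorname{tr}(\psi^*\psi)$) and, more importantly, the crucial ``rigidity step'' is not actually proved. Your claimed reason for $b\equiv 0$ --- ``else $\|f(\lambda_0+\text{small})\|>|\lambda_0+\text{small}|$'' --- is not correct as stated: a nonzero $b$ near $\lambda_0$ could in principle be offset by a decrease in the diagonal entries, so the operator norm need not exceed $|\lambda|$. The linear-left-inverse idea \emph{can} be made to work (equality in $|v^t M v|\le\|M\|\,\|v\|^2$ for the Takagi factorization forces the top singular direction of $f(\lambda)/\lambda$ to be the fixed vector $v$), but you did not carry this out, and even then you would need to justify that the left inverse may be taken linear. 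The paper sidesteps all of this: it never tracks the singular vectors holomorphically; instead it pins down $f_{11}$ by the Schwarz lemma and kills $f_{12}$ by the row-norm bound on the boundary.
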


\begin{proof}[Proof of Theorem~\ref{lem1}]
Let $f:\DD\to \overline{\mathcal R_{II}}$ an analytic disc lifting $\varphi$ to the Cartan domain $\mathcal R_{II}$ whose existence is guaranteed by Lemma~\ref{cclem}. It follows from Remark~\ref{LL} that $f$ is a weak $m$-extremal in $\mathcal R_{II}$, whence it is an $m$-extremal, by Proposition~\ref{prop}. Let $1\leq n\leq m-1$ be the smallest $n$ such that $f$ is an $(n+1)$-extremal. If $n=1$, then $f$ is a complex geodesic and the assertion follows from Lemma~\ref{ab}. Otherwise let us denote $a_1:= -f(0)$. Then $\Phi_{-a_1} \circ f= \Phi_{a_1}^{-1} \circ f$ is an $(n+1)$-extremal such that $\Phi_{-a_1} \circ f(0)=0$. Making use of
Remark~\ref{remark:balanced-extremals} we get an analytic disc $f_1$ in $\mathcal R_{II}$ such that $f(\lambda) = \Phi_{a_1}(\lambda f_1(\lambda))$, $\lambda \in \mathbb D$. Repeating this procedure inductively we get the assertion.
\end{proof}

It seems that $Z$ appearing in Theorem~\ref{lem1} must be rational but we could not prove it. However, we are able to show it for $m=3$:

\begin{theorem}\label{thla} Let $\varphi:\mathbb D\to \mathbb G_2$ be a weak $3$-extremal with respect to $0, \sigma_1,\sigma_2\in \mathbb D$, intersecting $\Sigma$.
Then, either
\begin{enumerate}
\item $\varphi$ is up to a composition with an automorphism of $\mathbb G_2$ of the form $(0, \varphi_2)$, where $\varphi_2$ is a Blaschke product of degree $2$, or
\item $\varphi$ lies entirely in $\Sigma$, i.e. $\varphi=\left(\varphi_1,\frac{\varphi_1^2}{4}\right)$, where $\varphi_1$ is a Blaschke product of degree at most $2$, or
\item there are $a_1,a_2 \in \mathcal R_{II}$, and a unitary symmetric matrix $U$ such that  $$\left(\frac{\varphi_1(\lambda)}{2}, \frac{\varphi_1(\lambda)}{2}, \varphi_2(\lambda) \right)= \pi({}^\tau \Phi_{a_1}(\lambda \Phi_{a_2}(U \lambda)) ),\quad \lambda \in \mathbb D,$$
\item there is $a\in \mathcal R_{II}$, a unitary matrix $U$ and a M\"obius function $m$ such that  $$\left(\frac{\varphi_1(\lambda)}{2}, \frac{\varphi_1(\lambda)}{2}, \varphi_2(\lambda) \right)= \pi({}^\tau \Phi_{a}(U \left(\begin{array}{cc} \lambda& 0 \\ 0 & \lambda m(\lambda)\end{array}\right) U^t) ),\quad \lambda \in \mathbb D.$$
\end{enumerate}
\end{theorem}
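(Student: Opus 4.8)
\textbf{Proof proposal for Theorem~\ref{thla}.}

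The plan is to start from the normal form provided by Theorem~\ref{lem1}: since $\varphi$ is a weak $3$-extremal not of the form (\dag) intersecting $\Sigma$, there is $1\le n\le 2$, points $a_1,\dots,a_n\in\mathcal R_{II}$, a unitary $U$ and $Z\in\OO(\mathbb D,\mathbb D)$ with $Z(0)=0$ such that
$$f(\lambda)=\Phi_{a_1}(\lambda\Phi_{a_2}(\cdots\lambda\Phi_{a_n}(U\left(\begin{smallmatrix}\lambda&0\\0&Z(\lambda)\end{smallmatrix}\right)U^t)\cdots))$$
is an $m$-extremal lift with $\pi\circ{}^\tau f=(\varphi_1/2,\varphi_1/2,\varphi_2)$ and $f_{11}$ nonvanishing. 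First I would dispose of the degenerate cases: if the lift $f$ from Lemma~\ref{cclem} lies in $\partial\mathcal R_I$ we land in case (1), and here I must check that $\varphi_2$ is then a Blaschke product of degree exactly $2$ — this follows because $\varphi$ being a genuine weak $3$-extremal (and not a $2$-extremal, else it would be a complex geodesic) forces the one-variable map $\varphi_2:\mathbb D\to\mathbb D$ to be $3$-extremal but not $2$-extremal, hence a Blaschke product of degree exactly $2$ by the classical Pick theory cited in the excerpt. Similarly, if $\varphi(\mathbb D)\subset\Sigma$ then $\varphi=(\varphi_1,\varphi_1^2/4)$ with $\varphi_1\in\OO(\mathbb D,\mathbb D)$, and weak $3$-extremality of $\varphi$ translates (via $\iota$ and the projection $p$) into $3$-extremality of $\varphi_1$, forcing $\deg\varphi_1\le 2$; this is case (2).

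Next I would handle the two surviving possibilities $n=1$ and $n=2$ coming from Theorem~\ref{lem1} when $\varphi(\mathbb D)$ is not contained in $\Sigma$ and the lift is interior. If $n=2$, the map is already of the shape $\Phi_{a_1}(\lambda\Phi_{a_2}(U\lambda))$ — but I need to promote the generic unitary $U$ to a \emph{symmetric} unitary, which is case (3). The reason one may take $U=U^t$ is that $\Phi_{a_2}(U\lambda)$, for the innermost complex-geodesic-like factor, is really $U(\begin{smallmatrix}\lambda&0\\0&Z(\lambda)\end{smallmatrix})U^t$ with $Z\equiv 0$ after the inductive peeling has terminated at level $n=2$; so $\Phi_{a_2}$ gets applied to $U(\begin{smallmatrix}\lambda&0\\0&0\end{smallmatrix})U^t$, and absorbing the diagonal/anti-diagonal unitary freedom one arranges $U$ symmetric. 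The main case is $n=1$: here $f(\lambda)=\Phi_{a}(U(\begin{smallmatrix}\lambda&0\\0&Z(\lambda)\end{smallmatrix})U^t)$ and the real content of the theorem is to prove $Z$ is a M\"obius function, which gives case (4) after renaming (the M\"obius $m$ absorbs one factor and the remaining $\lambda$ sits in the first diagonal slot). To prove $Z$ is M\"obius: $f$ is a $3$-extremal (not a complex geodesic, else $n$ would be $0$, contradiction), and $3$-extremality of $f$ in $\mathcal R_{II}$, pulled back through $\Phi_a^{-1}=\Phi_{-a}$ and through $U^t(\cdot)U$ (automorphisms), forces $\lambda\mapsto(\begin{smallmatrix}\lambda&0\\0&Z(\lambda)\end{smallmatrix})$ to be $3$-extremal in $\mathcal R_{II}$ with respect to the transported nodes; comparing with the product structure of $\mathcal R_{II}\supset\mathbb D^2$ (the diagonal), $3$-extremality forces each coordinate to be at worst a degree-$2$ Blaschke product, and since $Z(0)=0$ and $Z$ is nonconstant, $Z(\lambda)=\lambda m(\lambda)$ for a M\"obius $m$. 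I would also need to keep track of the Remark~\ref{bE} constraint $|f_{11}^*|=|f_{22}^*|$ a.e.\ on $\mathbb T$ inherited from Lemma~\ref{cclem}, to ensure the resulting $\pi\circ{}^\tau f$ genuinely lands in $\overline{\mathbb G_2}$ and matches $\varphi$.

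The step I expect to be the main obstacle is the verification, in the $n=1$ case, that the three-point extremality of the composed map $f=\Phi_a\circ(U(\cdot)U^t)\circ(\lambda,Z(\lambda))$ really does descend to three-point extremality of the bare diagonal disc $(\lambda,Z(\lambda))$ \emph{at the correctly transported nodes}, and — crucially — that having $f$ be $3$-extremal but \emph{not} $2$-extremal pins down $\deg Z\le 2$ rather than merely $\deg Z$ finite. This is where I would invoke Proposition~\ref{prop} (weak $=$ strong in classical Cartan domains) together with the inductive minimality of $n$ from the proof of Theorem~\ref{lem1}, and then run the one-variable Pick/Nevanlinna argument coordinatewise on the diagonal $\mathbb D^2\hookrightarrow\mathcal R_{II}$. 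A secondary subtlety is bookkeeping the automorphism normalizations so that the $a_1,a_2\in\mathcal R_{II}$ and the unitary $U$ in the final statement are exactly the diagonal/symmetric ones allowed, not arbitrary elements of $\Aut(\mathcal R_{II})$; this is routine but must be done carefully to get cases (3) and (4) in precisely the stated form.
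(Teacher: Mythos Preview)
Your argument has a genuine gap at precisely the step you flagged as the main obstacle, and the gap is fatal rather than technical. You claim that $3$-extremality of $\lambda\mapsto\left(\begin{smallmatrix}\lambda&0\\0&Z(\lambda)\end{smallmatrix}\right)$ in $\mathcal R_{II}$ (or in the diagonal $\mathbb D^2$) forces $\deg Z\le 2$. This is false: that map is a \emph{complex geodesic} in $\mathcal R_{II}$ for \emph{every} $Z\in\OO(\mathbb D,\mathbb D)$ with $Z(0)=0$, since $z\mapsto z_{11}$ is a left inverse. Hence it is automatically $2$-extremal (and so $3$-extremal) regardless of $Z$. Even descending to $\mathbb D^2$, $3$-extremality only requires \emph{one} coordinate to be a Blaschke product of degree $\le 2$, and the first coordinate $\lambda$ already is; so again $Z$ is unconstrained. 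Your coordinatewise Pick argument therefore yields nothing. (Relatedly, your reading of the $n=2$ case is off: the inductive peeling terminates in a complex geodesic $U\left(\begin{smallmatrix}\lambda&0\\0&Z(\lambda)\end{smallmatrix}\right)U^t$ with $Z$ a priori arbitrary, not with $Z\equiv 0$.)

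The actual constraint on $Z$ does not come from extremality in $\mathcal R_{II}$ at all; it comes from the requirement that $\pi\circ{}^\tau F$ be a weak $3$-extremal in $\mathbb E$, which forces properness there (Lemma~\ref{lemprop}) and hence, via Remark~\ref{bE}, the boundary relation $|F_{11}^*|=|F_{22}^*|$ a.e.\ on $\mathbb T$. The paper exploits this by contradiction: if $Z$ were not of the required degree one could replace it by a non-Nash $\tilde Z$ agreeing at the nodes; the resulting $\tilde F$ is still a $3$-extremal lift, and properness gives $\tilde F_{22}=B\,\tilde F_{11}$ on $\mathbb T$ for some Blaschke product $B$. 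Because $\tilde Z$ is non-Nash, this relation (built from rational automorphism formulas) must hold identically in the two variables $(\lambda,z)$, and an explicit computation with the automorphisms $\Phi_C,\Phi_D,\Phi_A$ (passing to the tetrablock coordinates) then yields a contradiction. This Nash-function / two-variable-identity mechanism is the real content of the proof and is entirely absent from your sketch.
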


To prove Theorem~\ref{thla} we need the following preparatory result.
\begin{lemma}\label{lemprop}
Any $m$-extremal in the tetrablock or in the symmetrised bidisc is proper.
\end{lemma}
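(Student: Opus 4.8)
\textbf{Proof proposal for Lemma~\ref{lemprop}.}

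The plan is to reduce the statement for the symmetrised bidisc to the one for the tetrablock, and the one for the tetrablock to properness of weak $m$-extremals in the classical Cartan domain $\mathcal R_I$, which in turn follows from Proposition~\ref{prop:proper-extremals} applied to $\mathcal R_{II}$ (a classical Cartan domain) together with the properness of $\pi|_{\mathcal R_{II}}:\mathcal R_{II}\to\mathbb E$. First I would handle the symmetrised bidisc: if $\varphi:\mathbb D\to\mathbb G_2$ is an $m$-extremal, then by Remark~\ref{LL} the disc $(\varphi_1/2,\varphi_1/2,\varphi_2)$ is an $m$-extremal in $\mathbb E$, and since $p\circ\iota=\operatorname{id}_{\mathbb G_2}$, any sequence in $\mathbb D$ along which $\varphi$ escapes every compact subset of $\mathbb G_2$ forces $(\varphi_1/2,\varphi_1/2,\varphi_2)$ to escape every compact subset of $\mathbb E$; hence properness in $\mathbb E$ implies properness in $\mathbb G_2$. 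So it suffices to treat $\mathbb E$.

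For the tetrablock, let $g:\mathbb D\to\mathbb E$ be an $m$-extremal. Using Lemma~\ref{cclem} (in the $\mathbb G_2$ setting) or more directly \cite[Lemma~7]{Edi-Kos-Zwo} applied to $g$, lift $g$ to an analytic disc $f:\mathbb D\to\overline{\mathcal R_I}$ with $\pi\circ f=g$; after the modification in Remark~\ref{rem} (absorbing the Blaschke factors of the off-diagonal entries) we may take $f$ to be an analytic disc in $\mathcal R_I$ unless $g$ is, up to $\Aut(\mathbb E)$, of the form $\lambda\mapsto(0,0,a(\lambda))$ with $a\in\OO(\mathbb D,\mathbb D)$. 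In that exceptional case $g$ is an $m$-extremal precisely when $a$ is an $m$-extremal in $\mathbb D$, hence a Blaschke product of degree at most $m-1$, which is proper onto $\mathbb D$ and maps into $\partial_s\mathbb E$; properness of $g$ into $\mathbb E$ is then immediate (indeed its cluster set on $\mathbb T$ lies in $\partial\mathbb E$). In the generic case, $f$ is an $m$-extremal in $\mathcal R_I$ by Remark~\ref{rem}, hence a weak $m$-extremal in the classical Cartan domain $\mathcal R_I$, so by Proposition~\ref{prop:proper-extremals} $f$ is proper: for any sequence $\lambda_k$ tending to $\mathbb T$, $f(\lambda_k)$ escapes every compact subset of $\mathcal R_I$, so any cluster point of $f(\lambda_k)$ in $\overline{\mathcal R_I}$ lies in $\partial\mathcal R_I$. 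Applying the continuous proper map $\pi|_{\mathcal R_I}$ — which sends $\partial\mathcal R_I$ into $\partial\mathbb E$ — we conclude $g(\lambda_k)=\pi(f(\lambda_k))$ escapes every compact subset of $\mathbb E$, i.e.\ $g$ is proper.

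The main obstacle I expect is the bookkeeping in the generic versus degenerate dichotomy: one must be sure that after clearing the Blaschke factors from the off-diagonal entries of the lift (Remark~\ref{rem}) the resulting disc $f$ really is an analytic disc \emph{in} $\mathcal R_I$ and not one slipping into $\partial\mathcal R_I$, and one must correctly identify which $m$-extremals of $\mathbb E$ fall into the $(0,0,a(\lambda))$ case so that properness there is handled by hand. Everything else is a routine transport of properness through the continuous proper surjection $\pi|_{\mathcal R_{II}}$ (or $\pi|_{\mathcal R_I}$) and the section $\iota$ of $p$.
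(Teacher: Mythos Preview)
Your overall strategy---reduce $\mathbb G_2$ to $\mathbb E$ via the proper embedding $\iota$, then lift a (weak) $m$-extremal in $\mathbb E$ to an $m$-extremal in a Cartan domain and invoke Proposition~\ref{prop:proper-extremals}---is exactly the paper's approach. However, there is a real gap in the last step. You assert that $\pi|_{\mathcal R_I}$ is a ``continuous proper map'' which ``sends $\partial\mathcal R_I$ into $\partial\mathbb E$''. This is false: by Remark~\ref{bE}, a point $x\in\partial\mathcal R_I$ satisfies $\pi(x)\in\partial\mathbb E$ \emph{only} when $|x_{12}|=|x_{21}|$. For generic boundary points of $\mathcal R_I$ one has $\pi(x)\in\mathbb E$, so properness of the lift $f:\mathbb D\to\mathcal R_I$ by itself does not force $\pi\circ f$ to be proper into $\mathbb E$. (The map that \emph{is} proper is $\pi|_{\mathcal R_{II}}$, but your lift, coming from \cite[Lemma~7]{Edi-Kos-Zwo} and Remark~\ref{rem}, lands in $\mathcal R_I$, not in $\mathcal R_{II}$.)

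The fix is precisely the one the paper uses: arrange the lift so that the non-tangential boundary values satisfy $|f_{12}^*|=|f_{21}^*|$ a.e.\ on $\mathbb T$ (this is the content of the modification in Lemma~\ref{cclem}/Remark~\ref{rem}, where after factoring out Blaschke products one gets off-diagonal entries with equal boundary moduli). Then properness of $f$ in $\mathcal R_I$ together with Remark~\ref{bE} yields that the radial cluster set of $\pi\circ f$ lies in $\partial\mathbb E$, and properness follows. Your ``main obstacle'' paragraph worries about the degenerate/generic dichotomy, but that is not where the argument is delicate; the missing ingredient is the boundary condition $|f_{12}^*|=|f_{21}^*|$ on the lift.
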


\begin{proof}[Proof of Lemma~\ref{lemprop}]
Since the embedding of $\mathbb G_2$ into $\mathbb E$ is proper it suffices to show the assertion for the tetrablock.
First recall that any $m$-extremal in $\mathcal R_{II}$ or in $\mathcal R_I$ is proper (see Proposition~\ref{prop:proper-extremals}).

Recall that $\pi(x)$ lies in the topological boundary of $\mathbb E$, where $x\in \partial \mathcal R_I$, if and only if $|x_{12}|=|x_{21}|$ (see Remark~\ref{bE}).

Now assume that $f:\mathbb D\to \mathbb E$ is a weak $m$-extremal. Then, using Lemma~\ref{cclem} we may lift it to an $m$-extremal $g$ in $\mathcal R_I$ such that $|g_{12}^*|=|g_{21}^*|$ almost everywhere on $\mathbb T$. Thus, to get the assertion it suffices to make use of the fact that $g$ is proper.
\end{proof}

\begin{proof}[Proof of Theorem~\ref{thla}] Composing $\varphi$ with an automorphism of $\mathbb G_2$ we may assume that $\varphi(\sigma_0)=0$ for some $\sigma_0\in \mathbb D$. If $\varphi(\mathbb D)\subset \Sigma$ or $\varphi$ is of the form (\dag) the assertion is clear.

Since the automorphism $\Phi_{\Lambda}$, where $\Lambda=\left( \begin{array}{cc} 0 & 0 \\ 0 & \alpha \end{array}\right)$ maps $\left( \begin{array}{cc} \lambda & 0 \\ 0 & \lambda \end{array}\right)$ to $\left( \begin{array}{cc} \lambda & 0 \\ 0 & m_\alpha(\lambda) \end{array}\right)$, it suffices to show that there are an $a\in \mathcal R_{II}$ and $\Phi\in \Aut(\mathcal R_{II})$ and a M\"obius map $m$ such that
$$(\varphi_1(\lambda)/2, \varphi_1(\lambda)/2, \varphi_2(\lambda))= \pi({}^\tau \Phi_{a}(\lambda \Phi(\left( \begin{array}{cc} \lambda &0 \\ 0 & m(\lambda) \end{array} \right) ) )$$ or $$\left(\frac{\varphi_1(\lambda)}{2}, \frac{\varphi_1(\lambda)}{2}, \varphi_2(\lambda) \right)= \pi({}^\tau \Phi(\lambda) ),\quad \lambda \in \mathbb D,$$

Lifting $\varphi$ as in Theorem~\ref{lem1} we get a 3-extremal $F:\mathbb D\to \mathcal R_{II}$ such that $\pi\circ {}^\tau F = \varphi$, where
\begin{equation}\label{F}
F(\lambda)= \Phi_a\left(\lambda \Phi\left( \left(\begin{array}{cc} \lambda & 0\\ 0 & Z(\lambda)
\end{array}\right)\right)\right),\quad \lambda \in \mathbb D.
\end{equation} 
or 
\begin{equation}\label{Fnew}
F(\lambda)=\Phi\left( \left(\begin{array}{cc} \lambda & 0\\ 0 & Z(\lambda)
\end{array}\right)\right),\quad \lambda \in \mathbb D,
\end{equation}
for some $Z\in \mathcal O(\mathbb D, \mathbb D)$, $\Phi\in \Aut(\mathcal R_{II})$ and $a\in \mathcal R_{II}$. First observe that the choice of $F$ implies, in particular, that the functions $|F_{11}|$ and $|F_{22}|$ are different. Actually, otherwise $F_{11} = \omega F_{22}$ for some $\omega \in \TT$. Since $F_{11}$ vanishes nowhere on $\DD$ we find that $\varphi$ does not intersect $\Sigma$, which gives a contradiction.

Note that our aim is to show that $Z$ is a M\"obius map in the first case or that $Z$ is a Blaschke product of degree at most two in the second one. Seeking a contradiction suppose that it is not the case. Then there is a holomorphic function $\tilde Z$ defined on a neighborhood of $\bar{\mathbb D}$ such that $\tilde Z(\sigma_i)=Z(\sigma_i)$, $i=1,2$ (and additionally $\tilde Z(0)=Z(0)$ in the second case), which is not a Nash function. Recall that a holomorphic function $f$ on $\DD$ is called a \emph{Nash function} if there is a non-zero complex polynomial $P:\CC^n \times \CC\to \CC$ such that $P(\lambda, f(\lambda))=0$ for $\lambda\in \mathbb D$. For basic properties of Nash functions we refer the Reader to \cite{Two}. The important fact that will be used in the sequel of the proof is that the set of Nash functions forms a subring of the ring of holomorphic functions on the unit disc (see \cite[Corollary~1.11]{Two}).

Putting $$\tilde F(\lambda)=  \Phi_a\left(\lambda \Phi\left(\begin{array}{cc} \lambda & 0\\ 0 & \tilde Z(\lambda)
\end{array}\right)\right),\quad \lambda \in \mathbb D,$$
(or $$\tilde F(\lambda)=  \Phi\left(\begin{array}{cc} \lambda & 0\\ 0 & \tilde Z(\lambda)
\end{array}\right),\quad \lambda \in \mathbb D),$$
we get a $3$-extremal in $\mathcal R_{II}$ such that $\pi\circ {}^\tau \tilde F$ is a $3$-extremal in $\mathbb E$. Note that $F_{11}F_{22}\not\equiv 0$ on $\mathbb D.$

Using Lemma~\ref{lemprop} we find that $|\tilde F_{11}|=|\tilde F_{22}|$ on $\mathbb T$. Thus there are finite Blaschke products or unimodular constants $B_1$, $B_2$ and a holomorphic function $g$ such that $\tilde F_{11}=B_1 g$ and $\tilde F_{22}=B_2 g$. Put $f:=\tilde F_{12}=\tilde F_{21}$. Then 
$$  \Phi_a\left(\lambda \Phi\left(\begin{array}{cc} \lambda & 0\\ 0 & \tilde Z(\lambda)
\end{array}\right)\right) = \left(\begin{array}{cc} B_1(\lambda) g(\lambda) & f(\lambda)\\ f(\lambda) & B_2(\lambda) g(\lambda)\end{array} \right)$$
or
$$   \Phi\left(\begin{array}{cc} \lambda & 0\\ 0 & \tilde Z(\lambda)
\end{array}\right) = \left(\begin{array}{cc} B_1(\lambda) g(\lambda) & f(\lambda)\\ f(\lambda) & B_2(\lambda) g(\lambda)\end{array} \right).$$ We will modify the above relation so that $B_1=1$. Then some technical arguments will provide us with a contradiction.

First note that at least one of $B_1$ or $B_2$ is not a unimodular constant. Indeed, if it were not true, then one may use arguments involving the concept of Nash functions. More precisely, we proceed as follows. Put
\begin{equation}\label{G} \tilde F(\lambda, z):= \Phi_a(\lambda \Phi\left(\begin{array}{cc} \lambda & 0\\ 0 & z
\end{array}\right)),\quad \lambda,z \in \mathbb D
\end{equation}
(or analogously 
\begin{equation} \tilde F(\lambda, z):=  \Phi\left(\begin{array}{cc} \lambda & 0\\ 0 & z
\end{array}\right),\quad \lambda,z \in \mathbb D.)
\end{equation}
Since $B_1,B_2\in \mathbb T$, we find that  $\tilde F_{11}(\lambda, \tilde Z(\lambda)) = \omega \tilde F_{22}(\lambda, \tilde Z(\lambda))$ on $\mathbb D$ where $\omega$ is a unimodular constant $\omega=B_1/B_2$. Since $\tilde Z$ is not Nash, the equality $\tilde F_{11}(\lambda,z)=\omega \tilde F_{22}(\lambda, z)$ holds for all $(\lambda, z)\in \mathbb D^2$. In particular, $\tilde F_{11}(\lambda, Z(\lambda))= \omega \tilde F_{22}(\lambda, Z(\lambda))$, $\lambda\in \mathbb D$, which gives a contradiction (as already mentioned, $|F_{11}|\not\equiv|F_{22}|$).

Note that
\begin{equation}\label{cont}\lambda \mapsto \left(\begin{array}{cc} g(\lambda) & f(\lambda)\\ f(\lambda) & B(\lambda) g(\lambda)\end{array} \right)\end{equation}
is a $3$-extremal mapping in $\mathcal R_{II}$, where $B=B_1 B_2$. Actually, this follows from Remark~\ref{rem}. Now we are in a position which gives us a contradiction.

Composing \eqref{cont} with a M\"obius map we may assume that $B$ vanishes at the origin. Moreover, after a composition with $\Phi_{\tilde \alpha}$, where $\tilde \alpha=\left(\begin{array}{cc} 0 & f(0)\\ f(0) & 0\end{array}\right)$, replacing $f$ and $g$ with other we may assume that $f(0)=0$.

Therefore, thanks to the procedure described in Remark~\ref{remark:balanced-extremals} there are $Z\in \mathcal O(\mathbb D, \mathbb D)$, and an automorphism $\Phi$ of $\mathcal R_{II}$ such that
\begin{equation}\label{cont1}\left(\begin{array}{cc} g(\lambda) & f(\lambda)\\ f(\lambda) & B(\lambda) g(\lambda)\end{array} \right)= \Phi_C(\lambda \Phi\left(\begin{array}{cc} \lambda & 0\\ 0 & Z(\lambda)
\end{array}\right)),\quad \lambda\in \mathbb D,\end{equation}
or 
\begin{equation}\label{cont2}\left(\begin{array}{cc} g(\lambda) & f(\lambda)\\ f(\lambda) & B(\lambda) g(\lambda)\end{array} \right)= \Phi\left(\begin{array}{cc} \lambda & 0\\ 0 & Z(\lambda)
\end{array}\right),\quad \lambda\in \mathbb D,\end{equation}
where $C=\left( \begin{array}{cc} c & 0 \\ 0 & 0 \end{array}\right)$ and $c=g(0)$. Since the set of Nash functions on a domain of $\mathbb C^n$ forms a subring of the ring of holomorphic functions  we easily find that $Z$ is not a Nash function. The goal is to derive a contradiction directly from~\eqref{cont1}. To do it we will find an explicit formula for $\Phi$ and then, to simplify the computations, we will pass to the tetrablock.

Assume first that \eqref{cont1} holds. Put
\begin{equation}\label{G1} G(\lambda, z):= \Phi_C\left(\lambda \Phi\left(\begin{array}{cc} \lambda & 0\\ 0 & z
\end{array}\right)\right),\quad \lambda,z \in \mathbb D.
\end{equation}

Since $G_{22}(\lambda, Z(\lambda)) = B(\lambda) G_{11} (\lambda, Z(\lambda)),$ $\lambda \in \mathbb D$, making use once again of the fact that automorphisms of the classical domain are rational and $Z$ is not a Nash function we find that $$G_{22}(\lambda, z) = B(\lambda) G_{11}(\lambda, z), \quad \lambda, z \in \mathbb D.$$ By \eqref{G}, $G_{11}(0,\cdot)\equiv -c$ and $G_{12}(0,\cdot)\equiv G_{21}(0,\cdot)\equiv 0$. Moreover, $G_{12}\equiv G_{21}$.

Write $G_{11}(\lambda, z) = -c+ \lambda g_{11}(\lambda, z)$, $G_{12}(\lambda, z) = \lambda g_{12}(\lambda, z)$, $G_{21}(\lambda, z) = \lambda g_{21}(\lambda, z)$, $\lambda, z \in \mathbb D$. Additionally, put $g_{12}:= g_{21}$.

The explicit formula for $\Phi_C^{-1}$ is $$\Phi_C^{-1}(x)=\left( \begin{array}{cc}\frac{x_{11}+c}{1+\overline c x_{11}}  & \sqrt{1-|c|^2} \frac{x_{12}}{1+ \overline c x_{11}} \\ \sqrt{1-|c|^2}\frac{x_{21}}{1+\overline c x_{11}} &  \frac{x_{22}+\overline c\det x}{1+\overline cx_{11}} \\ \end{array} \right),\quad x=(x_{ij})\in \mathcal R_I.$$ Write $B(\lambda)= \lambda b(\lambda)$. Composing the relation~\eqref{G} with $\Phi_C^{-1}$, dividing by $\lambda$ and putting $\lambda=0$ we infer that
$$\Phi\left(\begin{array}{cc} 0 & 0 \\ 0 & z\end{array} \right) = \left( \begin{array}{cc} \frac{g_{11}(0)}{1-|c|^2} & \frac{g_{12}(0)}{\sqrt{1-|c|^2}} \\ \frac{g_{21}(0)}{\sqrt{1-|c|^2}} & - b(0) c\end{array}\right),\quad z\in \mathbb D. $$

The first observation is that the component $(\Phi\left(\begin{array}{cc} 0 & 0 \\ 0 & z\end{array} \right))_{22}$ is a constant not depending on $z$. Therefore the composition $\Phi$ with $\Phi_D$, where $D=\left(\begin{array}{cc} 0 & 0\\ 0 & d \end{array}\right)$ and $d=-b(0)c$ has  the property $(\Phi_D\circ \Phi\left(\begin{array}{cc} 0 & 0 \\ 0 & z\end{array} \right))_{22}= 0$. This means that the the function $a:=(\Phi\left(\begin{array}{cc} 0 & 0 \\ 0 & \cdot \end{array} \right))_{12}$ satisfies
\begin{equation}\label{lhs}\det(\Phi_D\circ \Phi\left(\begin{array}{cc} 0 & 0 \\ 0 & z\end{array} \right)) = a^2(z),\quad z\in \mathbb D.\end{equation}
Thanks to the form of automorphisms of $\mathcal R_{II}$ we see that the function in the left side in the equation \eqref{lhs} is either constant of it is a rational function of degree $1$. Hence, $a$ is constant.

Therefore $$\Phi_D\circ \Phi\left(\begin{array}{cc} 0 & 0 \\ 0 & \cdot\end{array} \right)= \left(\begin{array}{cc} \alpha(\cdot) & a \\ a & 0\end{array} \right)$$ for some holomorphic function $\alpha$. Composing the above relation with $\Phi_A$, where $A=\left(\begin{array}{cc} 0 & a\\ a & 0 \end{array}\right)$ we find that
\begin{equation}\label{dt} \Phi_A \circ \Phi_D\circ \Phi\left(\begin{array}{cc} 0 & 0 \\ 0 & \cdot\end{array} \right)= \left(\begin{array}{cc} \frac{\alpha(\cdot)}{1-|a|^2} & 0 \\ 0 & 0\end{array} \right).
\end{equation}
Put $\Psi=\Phi_A\circ  \Phi_B\circ \Phi$. Using the description of $\Aut(\mathcal R_{II})$ we see that $\Psi$ is of the form $\Psi= U \Phi_\Gamma U^t$, where $\Gamma=\left( \begin{array}{cc} \gamma_1& \gamma_0\\  \gamma_0 & \gamma_2\end{array}\right)$ and $U$ is unitary. Comparing the determinants in \eqref{dt} we see that $$\frac{-\gamma_1 (z- \gamma_2) - \gamma_0^2}{1- \bar \gamma_1 z} = 0\quad \text{for any}\ z \in \mathbb D,$$ whence $\gamma_1= \gamma_0=0$. Simple calculations show that $U$ is anti-diagonal. To simplify the notation let us assume that $U={}^\tau 1$ (the argument is the same in the general case).

Summing up we have obtained the relation $$\Phi=\Phi_{-D} \circ \Phi_{-A} \circ (U\Phi_{\Gamma} U^t),$$ whence
$$\Phi_C\left(\lambda \Phi_{-D}\left(\Phi_{-A}\left(U \Phi_\Gamma\left(\begin{array}{cc} \lambda & 0\\ 0 & z\end{array} \right) U^t\right)\right)\right) = \left( \begin{array}{cc} G_{11}(\lambda, z) & G_{12}(\lambda, z) \\ G_{21}(\lambda, z) & \lambda b(\lambda) G_{11}(\lambda, z)\end{array} \right)$$ for $\lambda, z\in \mathbb D.$
In particular, replacing $z$ with $m_{\gamma_2}(z)$, we get that the equality
\begin{multline}\label{eqk} \left(\Phi_C\left(\lambda \Phi_{-D}\left(\Phi_{-A}\left(\begin{array}{cc}  z& 0\\ 0 &
\lambda\end{array} \right)\right)\right)\right)_{22}=\\ \lambda b(\lambda) \Phi_C\left(\lambda \Phi_{-D}\left(\Phi_{-A}\left(\begin{array}{cc} z & 0\\ 0 & \lambda \end{array} \right)\right)\right)_{11}
\end{multline}  holds for any $\lambda$ and $z$ (whenever it is well defined).

Here it is convenient to pass to $\mathbb E$ (as it lies in 3-dimensional space). The equation~\eqref{autE} gives the following formula for the automorphism induced by $\Phi_{C}$:
$$\psi_1(z) = \left( \frac{z_1 - c}{ 1- \bar c z_1}, \frac{z_2 - \bar c z_3}{1- \bar c z_1} , \frac{ z_3 - c z_2}{1- \bar c z_1}\right)$$
and the one induced by $\Phi_{-D}$:
$$\psi_2(z) = \left( \frac{z_1 + \bar d z_3}{1+ \bar d z_2}, \frac{z_2 + d}{1+ \bar d z_2}, \frac{z_3 + dz_1}{1+ \bar d z_2} \right).$$
Then \eqref{eqk} may be rewritten in the following way:
\begin{align*}
\left(\psi_1(\lambda. \psi_2 (\frac{z (1- |a|^2)}{1-\bar a^2 \lambda z}, \frac{\lambda  (1- |a|^2)}{1-\bar a^2 \lambda z}, \frac{(1- |a|^2)^2\lambda z - (\bar a \lambda z - a )^2}{(1-\bar a^2 \lambda z)^2})
\right)_2=\\
B(\lambda)\left(\psi_1(\lambda. \psi_2 (\frac{z(1- |a|^2)}{1-\bar a^2 \lambda z}, \frac{\lambda  (1- |a|^2)}{1-\bar a^2 \lambda z}, \frac{(1- |a|^2)^2\lambda z - (\bar a \lambda z - a )^2}{(1-\bar a^2 \lambda z)^2})
\right)_1,
\end{align*}
where $\lambda\mapsto \lambda. x$ is an action on $\mathbb C^3$ given by $\lambda. x = (\lambda x_1, \lambda x_2, \lambda^2 x_3)$, $x\in \mathbb C^3$, $\lambda \in \mathbb C$.

Thus we have the equality
$$\frac{y_2 - \bar c \lambda y_3}{\lambda y_1 - c} = b(\lambda),$$
where $y=\psi_2 (\frac{z (1- |a|^2)}{1-\bar a^2 \lambda z}, \frac{\lambda (1- |a|^2)}{1-\bar a^2 \lambda z}, \frac{(1- |a|^2)^2\lambda z - (\bar a \lambda z - a )^2}{(1-\bar a^2 \lambda z)^2})$.
Consequently
\begin{equation}\label{end}\frac{x_2 + d - \lambda \bar c x_3 - \lambda \bar c d x_1}{-c - c \bar d x_2 + \lambda x_1 + \lambda \bar d x_3}= b(\lambda),
\end{equation}
where $x=(\frac{z (1- |a|^2)}{1-\bar a^2 \lambda z}, \frac{\lambda (1- |a|^2)}{1-\bar a^2 \lambda z}, \frac{(1- |a|^2)^2\lambda z - (\bar a \lambda z - a )^2}{(1-\bar a^2 \lambda z)^2})$.

Recall that $-c b(0)= d$ (or put $\lambda=0$ in \eqref{end}).

Assume first that $a\neq 0$. Then letting $z\to \infty$ we find that
\begin{equation}\label{end1}\frac{x_2 + d - \lambda \bar c x_3 - \lambda \bar c d x_1}{-c - c \bar d x_2 + \lambda x_1 + \lambda \bar d x_3}= b(\lambda)\end{equation}
holds for $x=(\frac{1-|a|^2}{-\bar a^2 \lambda}, 0, \frac{-1}{\bar a^2})$. Putting this $x$ into \eqref{end1}, we get
$$\frac{d + \lambda \bar c \frac{1}{\bar a^2} + \bar c d \frac{1-|a|^2}{\bar a ^2}}{-c + \frac{1-|a|^2}{\bar a^2} - \lambda \frac{\bar d}{\bar a^2}} = b(\lambda).$$ Putting $\lambda = 0$ we find that $$\frac{1+ \bar c \frac{1-|a|^2}{\bar a^2}}{c+ \frac{1-|a|^2}{\bar a^2}} = \frac{1}{c},$$ whence either $|a|=1|$ or $|c|=1.$

If $a=0$ the situation is simpler. Indeed, then $x=(z, \lambda, \lambda z)$. Taking $z=0$ and putting it into \eqref{end} we get that
$$\frac{\lambda + d}{-c - c\bar d \lambda} = b(\lambda).$$ Since $b$ is a Blaschke product, $|c|=1$; a contradiction.

We are left with the case \eqref{cont2}. Recall that $f(0)=0$ and $B(0)=0$.
 Then $\Phi(x)=\Phi_D(U x U^t)$, where $C=\left(\begin{array}{cc} c & 0\\ 0 & 0
 \end{array}
 \right),$ $c=g(0)$ and $U$ is unitary. Then we easily get the following relation:
 $$\frac{u_{21}^2\lambda +  u_{22}^2 Z(\lambda) + \bar c (\det U)^2 \lambda Z(\lambda)}{u_{11}^2 \lambda + u_{12}^2 Z(\lambda) + c}=B(\lambda).
 $$
Since $Z$ is not a Nash function we easily deduce that the equality
$$\frac{u_{21}^2\lambda +  u_{22}^2 z + \bar c (\det U)^2 \lambda z}{u_{11}^2 \lambda + u_{12}^2 z + c}=B(\lambda)
 $$ holds for any $\lambda, z\in \DD$. From this we immediately derive a contradiction (put $\lambda=0$ and then $z=-c u_{12}^{-2}$).
\end{proof}

\subsection{(Weak) $3$-extremals in the symmetrised bidisc omitting $\Sigma$}

In the case when weak 3-extremals do not touch the royal variety we get the following
\begin{theorem}\label{thlb} Let $f:\mathbb D\to \mathbb G_2$ be a weak $3$-extremal in $\mathbb G_2$ such that $f(\mathbb D)\cap \Sigma = \emptyset$. Then there are Blaschke products $B_1,B_2$ of degree at most $2$ such that $$f=(B_1 + B_2, B_1 B_2).$$
\end{theorem}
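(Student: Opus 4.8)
The plan is to transport the problem to the bidisc, isolate a Carath\'eodory extremal of low degree, and read off the two Blaschke factors from an elementary identity. Since $f(\mathbb{D})$ avoids the royal variety $\Sigma$ and $(z_1,z_2)\mapsto(z_1+z_2,z_1z_2)$ restricts to a two-sheeted holomorphic covering of $\mathbb{G}_2\setminus\Sigma$ by $\mathbb{D}^2\setminus\{z_1=z_2\}$, simple connectedness of $\mathbb{D}$ lets one lift $f$ to a holomorphic map $(B_1,B_2)\colon\mathbb{D}\to\mathbb{D}^2$ with $B_1(\lambda)\ne B_2(\lambda)$ for all $\lambda$ and $f=(B_1+B_2,B_1B_2)$ --- this is precisely the second alternative in Lemma~\ref{cclem}, its first alternative ($f$ being $(0,\varphi_2)$ up to $\Aut(\mathbb{G}_2)$) being the special case $B_1=-B_2$. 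So everything reduces to proving that $B_1$ and $B_2$ are finite Blaschke products of degree at most $2$.

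Fix distinct $\sigma_0,\sigma_1,\sigma_2\in\mathbb{D}$ with respect to which $f$ is a weak $3$-extremal. The Nevanlinna--Pick theory of $\mathbb{G}_2$ (\cite{Agl-Lyk-You 2013}, \cite{Agl-You 2004}) reduces a $\mathbb{G}_2$-Pick problem to the scalar problems obtained by post-composing with $\Upsilon_\omega(s,p)=\frac{2\omega p-s}{2-\omega s}$, $\omega\in\mathbb{T}$; the absence of an $\overline{\mathbb{D}}\to\mathbb{G}_2$ interpolant at the $\sigma_j$ therefore forces some $\omega_0\in\mathbb{T}$ for which the $3\times3$ Pick matrix of the data $\sigma_j\mapsto\Upsilon_{\omega_0}(f(\sigma_j))$ is singular, whence $C:=\Upsilon_{\omega_0}\circ f$ --- being itself a $\mathbb{D}$-valued solution --- is the (unique) extremal solution, hence a finite Blaschke product of degree $\le2$. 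Clearing denominators in $\Upsilon_{\omega_0}\circ f=C$ and setting $t:=\overline{\omega_0}$ yields $2B_1B_2+(C-t)(B_1+B_2)=2Ct$, that is,
\begin{equation*}
(B_1-B_2)^2=(B_1+B_2-2t)(B_1+B_2+2C),\qquad B_2=\frac{(t-C)B_1+2Ct}{2B_1+C-t}=:\frac{N}{D},
\end{equation*}
and for each fixed $c\in\mathbb{T}$ the map $x\mapsto\frac{(t-c)x+2ct}{2x+c-t}$ (whose ``determinant'' $-(c+t)^2$ never vanishes on $\mathbb{D}$) is an automorphism of $\mathbb{D}$.

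Any $\overline{\mathbb{D}}\to\mathbb{D}^2$ interpolant of $(B_1,B_2)$ at the $\sigma_j$ would project to one of $f$ into $\mathbb{G}_2$, so $(B_1,B_2)$ is a weak $3$-extremal in $\mathbb{D}^2$; hence the $3\times3$ Pick matrix of one coordinate at the $\sigma_j$ is singular, and after relabelling I may assume $B_1$ is a finite Blaschke product with $d_1:=\deg B_1\in\{1,2\}$. Then $B_1$ and $C$ are inner, so the formula for $B_2$ shows $B_2^\ast$ is a.e.\ a disc-automorphism image of $B_1^\ast$; thus $B_2$ is inner, and being rational it is a finite Blaschke product, of degree $(\#\text{ zeros of }N\text{ in }\mathbb{D})-(\#\text{ zeros of }D\text{ in }\mathbb{D})$. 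Here $N=0$ is equivalent to $C=\psi\circ B_1$ with $\psi(w)=\frac{tw}{w-2t}$ mapping $\mathbb{D}$ into $\mathbb{D}$ and $|\psi|\le1$ on $\mathbb{T}$ (equality only at $w=t$); since $|C^\ast|\equiv1$ on $\mathbb{T}$, a Rouch\'e/homotopy argument gives $\#\{N=0\}=\deg C$. Similarly $|C^\ast-t|\le2=|2B_1^\ast|$ on $\mathbb{T}$ (equality only where $C^\ast=-t$) gives $\#\{D=0\}=d_1$; the finitely many boundary points at which these inequalities are equalities are common zeros of $N$ and $D$, so they do not affect the count. Therefore $\deg B_2=\deg C-d_1\le2$, and the proof is complete.

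I expect the crux to be the input used in the second paragraph: extracting from weak $3$-extremality in $\mathbb{G}_2$ a single $\omega_0\in\mathbb{T}$ making $\Upsilon_{\omega_0}\circ f$ a Blaschke product of degree $\le2$ --- this is exactly where the realization/interpolation theory of the symmetrised bidisc must be used --- with a secondary technicality in the boundary bookkeeping of the Rouch\'e arguments. If that route proves awkward, the same conclusion follows by lifting $f$ to the $3$-extremal $O\operatorname{diag}(B_1,B_2)O^{t}$ in $\mathcal{R}_{II}$ (with $O$ orthogonal, diagonalising $\bigl(\begin{smallmatrix}s&d\\d&s\end{smallmatrix}\bigr)$), invoking the structure of $3$-extremals behind Theorem~\ref{lem1} and Lemma~\ref{ab}, and rerunning the Nash-function argument of Theorem~\ref{thla} to force the free function to be rational; the present approach appears the shorter of the two.
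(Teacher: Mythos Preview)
Your approach is genuinely different from the paper's, and it has a real gap.

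\textbf{How the paper proceeds.} After lifting to $(\varphi_1,\varphi_2)$ in $\mathbb D^2$ and observing (as you do) that one coordinate, say $\varphi_1$, is a Blaschke product of degree $\le 2$, the paper does \emph{not} attempt any direct degree bookkeeping on $\varphi_2$. Instead it argues by contradiction: if $\varphi_2$ is not Blaschke of degree $\le2$, pick a non-rational $\psi:\overline{\mathbb D}\to\mathbb D$ matching $\varphi_2$ at the three nodes; then $(\varphi_1+\psi,\varphi_1\psi)$ interpolates the same data, hence is itself a weak $3$-extremal; by Rouch\'e (since $\varphi_1$ is inner and $\psi$ has compact range) it meets $\Sigma$; now Theorem~\ref{thla} forces it to be rational, contradicting the choice of $\psi$. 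So Theorem~\ref{thlb} is deduced \emph{from} Theorem~\ref{thla}, not independently of it.

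\textbf{The gap in your argument.} Your Rouch\'e counts are not merely technically delicate at the boundary---they are wrong. Take $B_1=m_{-1/2}$, $B_2=m_{1/2}$: then $B_1-B_2$ vanishes only at $\pm1\in\mathbb T$, so $f=(B_1+B_2,B_1B_2)$ avoids $\Sigma$, and with $\omega_0=1$ one computes $C=\Upsilon_1\circ f=(4\lambda-1)/(4-\lambda)$, a Blaschke product of degree $1$. Here $d_1=1$, yet
\[
D=2B_1+C-t=\frac{9(\lambda-2)(\lambda+1)}{(2+\lambda)(4-\lambda)}
\]
has \emph{no} zeros in $\mathbb D$ (its only finite zeros are at $2$ and $-1$). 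Thus $\#\{D=0\text{ in }\mathbb D\}=0\neq d_1$, and your formula $\deg B_2=\deg C-d_1$ gives $0$ instead of the correct value $1$. The underlying reason is that on $\mathbb T$ one only has $|C-t|\le 2=|2B_1|$, with equality precisely where $C=-t$; those boundary points are not always ``common zeros of $N$ and $D$'' as you assert, and they genuinely shift the count. (A smaller slip: for $c\in\mathbb T$ the M\"obius map $x\mapsto\frac{(t-c)x+2ct}{2x+c-t}$ does send $\mathbb T$ to $\mathbb T$, but it exchanges $\mathbb D$ with its exterior---e.g.\ it sends $0$ to $2ct/(c-t)$, of modulus $\ge1$---so it is not an automorphism of $\mathbb D$. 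This does not affect the conclusion that $B_2$ is inner, but the stated reason is incorrect.)

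Your alternative route in the final paragraph---lifting to $\mathcal R_{II}$ and rerunning a Nash-function argument---is essentially the mechanism of Theorem~\ref{thla}, and the paper's proof shows that invoking Theorem~\ref{thla} is indeed the efficient way to close the argument.
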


\begin{proof}[Proof of Theorem~\ref{thlb}]
If $f:\mathbb D\to \mathbb G_2$ is a weak $3$-extremal such that $f(\mathbb D)\cap \Sigma = \emptyset$, then we may lift it to $\mathbb D^2$. Namely, there are $\varphi_1,\varphi_2\in \mathcal O(\mathbb D, \mathbb D)$ such that $$f=(\varphi_1+\varphi_2, \varphi_1 \varphi_2).$$ Since $f$ is a weak $3$-extremal we get that $\varphi=(\varphi_1, \varphi_2)$ is $3$-extremal in the bidisc. Therefore one of functions $\varphi_1$ or $\varphi_2$ is a Blaschke product of degree at most $2$. Losing no generality assume that it is $\varphi_1$.

We claim that $\varphi_2$ is a Blaschke product of degree at most $2$, too. Otherwise, one can find a non-rational holomorphic function $\psi:\overline{\mathbb D}\to \mathbb D$ which agrees with $\varphi_2$ at $3$ given points and such that $\psi(\mathbb D)\subset \subset \mathbb D$. Then $(\varphi_1+\psi, \varphi_1 \psi)$ is a weak $3$-extremal in $\mathbb G_2$. It follows form Rouch\'e theorem that the equation $\varphi_1=\psi$ has at least one solution in $\mathbb D$, whence $(\varphi_1+\psi, \varphi_1 \psi)$ is an irrational weak $3$-extremal in $\mathbb G_2$ intersecting the royal variety $\Sigma$. This contradicts Theorem~\ref{thla}.
\end{proof}

\subsection{3-extremals in the symmetrised bidisc are inner and rational of degree at most $4$}

As a consequence of Theorems~\ref{thla} and \ref{thlb} we obtain an affirmative answer to a question posed in \cite{Agl-Lyk-You2 2013}.
\begin{theorem}\label{thm:three-extremals-are-rational}
Let $\varphi:\mathbb D\to \mathbb G_2$ be a $3$-extremal mapping. Then:
\begin{itemize}
\item $\varphi$ is a rational function of degree at most $4$;
\item $\varphi$ is $\mathbb G_2$-inner, i.e. $\varphi(\mathbb T)\subset \partial_s\mathbb G_2,$ where $\partial_s \mathbb G_2=\{(z+w,zw): |z|=|w|=1\}$ is the Shilov boundary of $\mathbb G_2$.
\end{itemize}
\end{theorem}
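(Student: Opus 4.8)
\textbf{Proof plan for Theorem~\ref{thm:three-extremals-are-rational}.} The plan is to deduce both assertions from the structural description of weak $3$-extremals already obtained in Theorems~\ref{thla} and~\ref{thlb}, using the fact that a $3$-extremal is in particular a weak $3$-extremal for every triple of distinct points in $\mathbb D$. First I would split according to whether $\varphi(\mathbb D)$ meets the royal variety $\Sigma$ or not. If $\varphi(\mathbb D)\cap\Sigma=\emptyset$, then Theorem~\ref{thlb} immediately writes $\varphi=(B_1+B_2,B_1B_2)$ with $B_1,B_2$ Blaschke products of degree at most $2$; such a $\varphi$ is rational of degree at most $4$ (the degree of $B_1B_2$), and since each $B_i$ maps $\mathbb T$ into $\mathbb T$, one has $\varphi(\mathbb T)\subset\{(z+w,zw):|z|=|w|=1\}=\partial_s\mathbb G_2$, so both conclusions hold.

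If instead $\varphi(\mathbb D)\cap\Sigma\neq\emptyset$, then (after composing with an automorphism of $\mathbb G_2$, which affects neither rationality nor the degree nor the property of mapping $\mathbb T$ into $\partial_s\mathbb G_2$) I would invoke Theorem~\ref{thla} and treat its four cases. Cases (1) and (2) are immediate: there $\varphi$ is, up to $\Aut(\mathbb G_2)$, either $(0,\varphi_2)$ with $\varphi_2$ a Blaschke product of degree $\le 2$, or $(\varphi_1,\varphi_1^2/4)$ with $\varphi_1$ a Blaschke product of degree $\le 2$; both are rational of degree $\le 4$ and inner. In cases (3) and (4) one has an explicit formula $(\varphi_1/2,\varphi_1/2,\varphi_2)=\pi({}^\tau\Phi_{a_1}(\lambda\Phi_{a_2}(U\lambda)))$ or $\pi({}^\tau\Phi_a(U\,\mathrm{diag}(\lambda,\lambda m(\lambda))\,U^t))$ with the $\Phi$'s automorphisms of $\mathcal R_{II}$ (hence rational in the matrix entries), $\pi$ a polynomial map, $U$ unitary and $m$ a M\"obius map; composing rational maps of controlled degree shows $\varphi$ is rational, and a bookkeeping of degrees (the entries are built from $\lambda$, a single extra Blaschke factor $m(\lambda)$, and linear-fractional automorphisms) gives degree at most $4$. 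For innerness in these cases I would use Lemma~\ref{lemprop}: a $3$-extremal in $\mathbb G_2$ is proper, and combined with the boundary correspondences in Remark~\ref{bE} and the description of $\partial_s\mathbb E$, a proper rational $\varphi$ must send $\mathbb T$ into $\partial_s\mathbb G_2$ (alternatively, the lifting disc $f$ in $\mathcal R_I$ satisfies $|f_{12}^*|=|f_{21}^*|$ a.e. on $\mathbb T$ and, being a proper $3$-extremal, maps $\mathbb T$ into $\partial_s\mathcal R_I$, so $\pi\circ{}^\tau f$ maps $\mathbb T$ into $\partial_s\mathbb E$, and then $p$ maps it into $\partial_s\mathbb G_2$).

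The main obstacle I expect is the \emph{degree count} in cases (3) and (4) of Theorem~\ref{thla}: one must check carefully that the composition of the polynomial map $\pi$ with the column-swap ${}^\tau(\cdot)$ and the tower of automorphisms $\Phi_{a_i}$ of $\mathcal R_{II}$, applied to a matrix whose entries are $\lambda$ and at most one further Blaschke factor, yields a rational map $\mathbb D\to\mathbb G_2$ whose two coordinates are rational of degree exactly at most $4$, rather than something larger; here the cancellations coming from the specific (diagonal/anti-diagonal) form of $U$ and the fact that $\varphi_1$ must be divisible by $\lambda$ (since $\varphi(0)\in\Sigma$ after normalization) are essential, and this is most cleanly done by passing to the explicit tetrablock formula~\eqref{autE} and tracking numerator and denominator degrees in $\lambda$. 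A secondary, milder point is to make sure the normalizing automorphism of $\mathbb G_2$ used to reduce to Theorem~\ref{thla} is itself rational of degree one (which is clear from the explicit form of $\Aut(\mathbb G_2)$ induced by~\eqref{autE} via $p$), so that composing with it changes neither the rationality nor the degree bound nor the Shilov-boundary property.
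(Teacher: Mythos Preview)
Your plan is essentially the paper's: split according to whether $\varphi$ meets $\Sigma$, invoke Theorems~\ref{thla} and~\ref{thlb}, and read off rationality, the degree bound, and innerness from the explicit lifting to $\mathcal R_{II}$.

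There is one genuine gap, in the innerness argument. Properness of $\varphi$ (Lemma~\ref{lemprop}) only gives $\varphi(\mathbb T)\subset\partial\mathbb G_2$, and likewise properness of the lift $f$ only gives $f(\mathbb T)\subset\partial\mathcal R_I$; neither forces the \emph{Shilov} boundary. For instance $\lambda\mapsto\mathrm{diag}(\lambda,0)$ is proper into $\mathcal R_I$ but misses $\partial_s\mathcal R_I$ entirely, and $\lambda\mapsto(\lambda,0)$ is a proper rational disc in $\mathbb G_2$ that is not $\mathbb G_2$-inner. What the paper actually uses is the explicit form of the lift coming from Theorem~\ref{thla}: for $|\lambda|=1$ the innermost matrix ($\lambda U$ with $U$ unitary symmetric in case (3), or $U\,\mathrm{diag}(\lambda,\lambda m(\lambda))\,U^t$ in case (4)) is unitary, and since automorphisms of $\mathcal R_{II}$ and multiplication by unimodular scalars preserve $\partial_s\mathcal R_{II}$, the whole lift $F(\lambda)=\Phi_a(\lambda\Phi(\lambda))$ lies in $\partial_s\mathcal R_{II}$ for $\lambda\in\mathbb T$; hence ${}^\tau F(\lambda)\in\partial_s\mathcal R_I$. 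Only then does one combine this with properness (which supplies $\pi({}^\tau F(\lambda))\in\partial\mathbb E$) and Remark~\ref{bE} to conclude $\pi({}^\tau F(\lambda))\in\partial_s\mathbb E$, and finally $\varphi(\lambda)\in\partial_s\mathbb G_2$.

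On the degree count in case (3), your instinct to pass to the explicit formula~\eqref{autE} is exactly what the paper does, but it adds a small structural observation that makes the bookkeeping clean: after normalizing so that $\sigma_0=0$ and the inner automorphism satisfies $\Phi(0)=0$, the condition $\varphi(0)\in\Sigma$ forces all but one entry of the outer parameter $a\in\mathcal R_{II}$ to vanish (only a single diagonal entry survives). With $a$ in this special form the entries of $\Phi_a(\lambda\Phi(\lambda))$ are visibly rational of the shape $p_{ij}/q$ with $\deg p_{ij},\deg q\le 4$, and the general case is then obtained by composing with an automorphism of $\mathbb E$, which (as you note) does not raise the degree.
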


\begin{proof}[Proof of Theorem~\ref{thm:three-extremals-are-rational}] If $\varphi$ omits $\Sigma$, the assertion is a direct consequence of Theorem~\ref{thlb}. Similarly, the assertion is clear if $\varphi$ lies entirely in $\Sigma$.

Suppose that $\varphi(\mathbb D)$ touches $\Sigma$ and $\varphi(\mathbb D)\not\subset \Sigma$.

If $\left(\frac{\varphi_1(\lambda)}{2}, \frac{\varphi_1(\lambda)}{2}, \varphi_2(\lambda) \right)= \pi({}^\tau \Phi_{a_1}(U \left(\begin{array}{cc}  \lambda & 0 \\ 0 & \lambda m(\lambda ) \end{array} \right) U^t) ),$ $\lambda \in \mathbb D,$ then the assertion is clear. So suppose that such a representation is not possible. Let $\sigma_0$ be such that $\varphi(\sigma_0)\in \Sigma$. Since a composition with a M\"obius map does not change a degree of a rational mapping we may assume that $\sigma_0=0.$ Then, there are $a\in \mathcal R_{II}$, and $\Phi\in \Aut(\mathcal R_{II})$ such that  $$\left(\frac{\varphi_1(\lambda)}{2}, \frac{\varphi_1(\lambda)}{2}, \varphi_2(\lambda) \right)= \pi({}^\tau \Phi_{a}(\lambda \Phi(\lambda)) ),\quad \lambda \in \mathbb D.$$

Note that if $\Phi(0)=0$, then all but one entries of the matrix $a$ are equal to $0$ (the element lying on the diagonal may not vanish). Then a straightforward calculation shows that $\Phi_{ij}$ are of the form $\frac{p_{ij}}{q},$ where $p_{ij}$, $q$ are polynomials of degree at most $4$.

Therefore, in a general case, that is when $\varphi(0)\in \Sigma$, it suffices to compose $\varphi$ with an automorphism of $\mathbb E$.

To prove the second part note that $\Phi_a(\lambda \Phi(\lambda))\in \partial_s \mathcal R_{II}$ for $\lambda\in \TT$, whence ${}^\tau \Phi_a(\lambda \Phi(\lambda))\in \partial_s \mathcal R_{I}$ for any $\lambda \in \TT$. Since $\varphi$ is proper, we see that $\pi({}^\tau \Phi_a(\lambda \Phi(\lambda))) \in \partial \EE$ for $\lambda \in \TT$. Thus, the assertion of this part follows from the fact that $\pi(x)$ lies in the Shilov boundary of $\mathbb E$ for any $x$ lying in the Shilov boundary of $\mathcal R_I$ and such that $\pi(x)\in \EE$. (Remark~\ref{bE}).
\end{proof}

\section{Identity is $m$-extremal}\label{Ident}
We conclude the paper with a simple observation on a more general notion of $m$-extremals introduced in \cite{Agl-Lyk-You2 2013} and we present a solution of a problem posed there.

Similarly to the case of mappings defined on the unit disc we introduce the notion of $m$-extremal mappings defined on a general domain.

Let $D$ be a bounded domain in $\CC^N$ and $\Omega$ a domain in $\CC^M$. Let $m\geq 2$ and let $\lambda_1,\ldots,\lambda_m\in D$ be pairwise different points
and $z_1,\ldots,z_m\in\Omega$. Following \cite{Agl-Lyk-You 2013}
we say that the interpolation data
\begin{equation}
 \lambda_j\mapsto z_j,\ D\to\Omega,\quad j=1,\ldots,m
\end{equation}
is {\it extremally solvable} if there is a map $h\in\OO(D,\Omega)$ such that $h(\lambda_j)=z_j$, $j=1,\ldots,m$
and for any open neighborhood $U$ of $\bar D$ there is no $f\in\OO(U,\Omega)$ such that $f(\lambda_j)=z_j$, $j=1,\ldots,m$.

We say that $h\in\OO(D,\Omega)$ is {\it $m$-extremal} if for all choices of $m$ pairwise distinct points $\lambda_1,\ldots,\lambda_m\in D$
the interpolation data
\begin{equation}
 \lambda_j\mapsto h(\lambda_j),\ D\to\Omega,\quad j=1,\ldots,m
\end{equation}
is extremally solvable. Note that if $h$ is $m$-extremal then it is $m+1$ extremal.

\bigskip

The question posed (and partially solved) in \cite{Agl-Lyk-You 2013} (Proposition 2.5 and remark just before it) is whether the identity is $m$-extremal. The answer is yes.

Before we present the proof let us recall definition of the Lempert function of a domain $D$ of $\CC^n$. It is a holomorphically invariant function denoted by $\tilde k_D$ and given by the formula (for basic properties of the Lempert function see \cite{Jar-Pfl 1993}):
$$\tilde k_D(z,w) = \inf \rho(0,\sigma), $$ where the infimimum is taken over all $\sigma>0$ such that there is an analytic disc $f:\DD\to D$, $f(0)=z$ and $f(\sigma)=w$. Here $\rho$ is the Poincar\'e distance on $\DD$ given by the formula
\begin{equation}
\rho(\lambda_1,\lambda_2):=\frac{1}{2}\log\frac{1+\left|\frac{\lambda_1-\lambda_2}{1-\bar\lambda_1\lambda_2}\right|}{1-\left|\frac{\lambda_1-\lambda_2}{1-\bar\lambda_1\lambda_2}\right|},\;\lambda_1,\lambda_2\in\DD.
\end{equation}
Note that we changed the letter denoting the Poincar\'e distance to avoid the ambiguities that could be caused by the earlier use of the letter $p$ in the other context.
\begin{remark}\label{rem:fin}
Recall that the analytic disc passing through $z$ and $w$ lying in $D$ required in the definition of the Lempert function always exists (see e.g. Remark 3.1.1 in \cite{Jar-Pfl 1993}). Note also that applying the definition of the Lempert function together with the standard reasoning employing the Montel theorem we get for any bounded domain $D$ in $\mathbb C^n$ and for any $w,z\in D$ that there is a mapping $f:\DD\to \overline{D}$ such that
$f(0)=w$, $f(\sigma)=z$ and $p(0,\sigma)=\tilde k_D(w,z)$. Note that under the additional assumption that $D$ is \textit{taut} (i.e. the family $\mathcal O(\DD,D)$ is normal) we may assume that $f(\DD)\subset D$ -- see Proposition 3.2.4 in \cite{Jar-Pfl 1993}.
\end{remark} Keeping in mind these observations we are able to prove the following result.

\begin{prop}\label{prop:identity-is-extremal}
 Let $D$ be a bounded domain in $\CC^N$. Then the identity mapping $\id_D:D\to D$ is $m$-extremal for any $m\geq 2$.
\end{prop}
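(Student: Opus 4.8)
The plan is to show that if the identity were not $m$-extremal, then there would be $m$ distinct points $\lambda_1,\ldots,\lambda_m \in D$, an open neighbourhood $U$ of $\overline D$, and a map $f \in \OO(U,D)$ with $f(\lambda_j) = \lambda_j$ for $j = 1,\ldots,m$, while no such interpolant extends holomorphically past $\overline D$ would be the negation — so I must instead produce a contradiction from the existence of a "better" interpolant than $\id_D$. Concretely, the failure of $m$-extremality means: there exist pairwise distinct $\lambda_1,\ldots,\lambda_m\in D$ and $f\in\OO(U,D)$ for some open $U \supset \overline D$ with $f(\lambda_j)=\lambda_j$ for all $j$. The goal is to turn this single fixed-point-data map into a contradiction, and the natural device is iteration together with the Lempert function $\tilde k_D$.

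First I would fix two of the interpolation nodes, say $\lambda_1$ and $\lambda_2$, and record the key monotonicity: since $f:U\to D$ and $\overline D\subset U$ with $f(\overline D)\subset D$, the image $f(\overline D)$ is a compact subset of $D$, so there is $r<1$ with $\tilde k_D(f(z),f(w)) \leq r\cdot$ (something) — more precisely, by the Schwarz–Pick property of $\tilde k_D$ under holomorphic maps, $\tilde k_D(f(z),f(w)) \leq \tilde k_{D}(z,w)$ for $z,w\in D$, but because $f(\overline D)$ is relatively compact in $D$ we actually get a strict contraction on that compact set: there is $c\in(0,1)$ with $\tilde k_D(f(z),f(w)) \leq c\,\tilde k_D(z,w)$ for all $z,w\in\overline D$. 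Then, since $f(\lambda_1)=\lambda_1$ and $f(\lambda_2)=\lambda_2$, we get $\tilde k_D(\lambda_1,\lambda_2) = \tilde k_D(f(\lambda_1),f(\lambda_2)) \leq c\,\tilde k_D(\lambda_1,\lambda_2)$, and since $\tilde k_D(\lambda_1,\lambda_2)>0$ (points are distinct, $D$ bounded so $\tilde k_D$ is a genuine pseudodistance separating points... actually for a bounded domain $\tilde k_D \geq \tilde k_{\text{ball}}>0$ on distinct points), this forces $c\geq 1$, a contradiction.

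The main obstacle is justifying the strict contraction constant $c<1$: the plain distance-decreasing property only gives $c=1$. The point is that $f$ is holomorphic on a neighbourhood $U$ of $\overline D$ with $f(U)$ — or at least $f(\overline D)$ — relatively compact in $D$, so $f$ factors as $D \hookrightarrow D \xrightarrow{f} D'$ where $D' = $ some slightly larger image set still inside $D$; one then uses that $\tilde k_D$ restricted to a compact subset $K\Subset D$ is comparable to $\tilde k_{D_0}$ for a bounded $D_0$ with $K\subset D_0\Subset D$, and the inclusion $D_0\hookrightarrow D$ is a strict contraction of Lempert functions on the relevant scale. I would phrase this via: pick a bounded domain $D_1$ with $f(\overline D)\subset D_1\Subset D$; then $\tilde k_D(f(\lambda_1),f(\lambda_2)) \leq \tilde k_{D}\big|_{\text{from }D_1}$ and, because $D_1\Subset D$, there is a uniform $c<1$ with $\tilde k_D(p,q)\le c\,\tilde k_{D_1}(p,q)$ for $p,q\in D_1$ — this is the standard fact that shrinking to a relatively compact subdomain strictly contracts the Lempert function, an easy Montel/normality argument. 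Combined with $\tilde k_{D_1}(f(\lambda_1),f(\lambda_2))\le \tilde k_D(\lambda_1,\lambda_2)$ (Schwarz–Pick for $f$), this closes the loop. Note that for this argument only $m=2$ nodes are used, which is consistent with "$m$-extremal implies $(m+1)$-extremal" running the other way: it suffices to prove $2$-extremality, and since $\id_D$ being $2$-extremal trivially implies it is $m$-extremal for all $m\ge 2$ by the remark, we are done.
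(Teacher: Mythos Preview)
Your overall strategy coincides with the paper's: reduce to $m=2$, assume there is $h\in\OO(U,D)$ with $U\supset\bar D$ fixing two distinct points, and derive a contradiction via the Lempert function using that $h(\bar D)\Subset D$. The difference is that you package the key step as a general lemma (``$D_1\Subset D$ bounded $\Rightarrow\tilde k_D\le c\,\tilde k_{D_1}$ on $D_1$ with $c<1$'') and dismiss it as ``an easy Montel/normality argument.'' That is precisely the point where the work lies, and your justification is insufficient. The Lempert function is only upper semicontinuous, so a straight normal-families/compactness argument does not directly yield a uniform $c<1$; and even the pointwise strict inequality $\tilde k_D(\lambda_1,\lambda_2)<\tilde k_{D_1}(\lambda_1,\lambda_2)$ (which is all you actually need) requires producing, from any competitor disc $\phi:\DD\to D_1$, a strictly better competitor $\psi:\DD\to D$ hitting the same two points at a smaller hyperbolic parameter. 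That construction is exactly what the paper carries out explicitly: it takes an optimal disc $f:\DD\to\bar D$ for $\tilde k_D(w,z)$ (existence by Montel, as in Remark~\ref{rem:fin}), composes with $h$ to obtain $g=h\circ f$ with $g(\DD)\Subset D$, and perturbs by the linear term $\tilde g(\lambda)=g(\lambda)+\tfrac{\lambda}{t\sigma}(g(\sigma)-g(t\sigma))$ to force $\tilde g(t\sigma)=z$ for $t<1$ close to~$1$.

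Two remarks. First, if you replace $\tilde k_D$ by the Kobayashi pseudodistance $k_D$, your contraction lemma \emph{is} standard: the infinitesimal inequality $\kappa_D(p;v)\le \tfrac{M}{M+\epsilon}\,\kappa_{D_1}(p;v)$ (with $D_1\subset B(0,M)$ and $D_1+B(0,\epsilon)\subset D$) integrates to $k_D\le c\,k_{D_1}$, and bounded domains are Kobayashi hyperbolic, so $k_D(\lambda_1,\lambda_2)>0$. That route genuinely avoids the explicit perturbation. Second, what you wrote is not wrong---the lemma you invoke is true---but as it stands you have deferred the entire content of the proof to an unproven assertion whose honest verification is the paper's argument.
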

\begin{proof}[Proof]
It is sufficient to show that $\id_D$ is $2$-extremal. Suppose that it does not hold. Then there are points $w,z\in D$, $w\neq z$, an open $U\supset\bar D$ and $h\in\OO(U,D)$ such that $h(w)=w$, $h(z)=z$. Making use of Remark~\ref{rem:fin} (i.e. making use of the definition of the Lempert function and applying standard reasoning employing the Montel theorem) we get a $\sigma\in (0,1)$ such that there is a mapping $f:\DD\to \overline{D}$ with
$f(0)=w$, $f(\sigma)=z$ and $\rho(0,\sigma)=\tilde k_D(w,z)$. Now the function $g:=h\circ f\in\OO(\DD,D)$ satisfies the following properties $g(0)=w$,
$g(\sigma)=z$ and $g(\DD)\subset h(\overline{D})\subset\subset D$. For $0<t<1$ we define $\tilde g(\lambda):=g(\lambda)+\frac{\lambda}{t\sigma}\left(g(\sigma)-g(t\sigma)\right)$, $\lambda\in\mathbb D$. Note that for $0<t<1$ sufficiently close to $1$ we get that $\tilde g\in\OO(\mathbb D,D)$, $\tilde g(0)=g(0)=w$, $\tilde g(t\sigma)=g(\sigma)=z$ so $\tilde k_D(w,z)\leq \rho(0,t\sigma)<\rho(0,\sigma)$ -- a contradiction.
\end{proof}

\section{Appendix. Sketch of proof of Lemma~\ref{ab}}
\begin{proof}[Sketch of proof of Lemma~\ref{ab}]
Fix $\sigma\in (0,1)$ and let $a:=f(\sigma)\in \mathcal R_{II}\setminus \{0\}$. Applying the singular value decomposition we find that there is a unitary matrix $U$ and $0\leq \lambda_2\leq \lambda_1< 1$ such that $a= U \left( \begin{array}{cc} \lambda_1 & 0 \\ 0 & \lambda_2 \end{array} \right) U^t$. Since $L_U: x\mapsto U x U^t$ is an automorphism of $\mathcal R_{II}$, composing $f$ with $L_U^{-1}$ we may assume that $U=1$.


Let $g:\DD\to \DD$ be any holomorphic mapping such that $g(0)=0$ and $g(\lambda_1) = \lambda_2$. Clearly, $$\lambda\mapsto \left( \begin{array}{cc} \lambda & 0 \\ 0 & g(\lambda) \end{array} \right)$$ is a complex geodesic in $\mathcal R_{II}$, as $\mathcal R_{II}\ni z \mapsto z_{11}$ is its left inverse, whence $\tilde k_{\mathcal R_{II}}(a, 0)= \rho(\lambda_1, 0)$. Since $f$ is a complex geodesic, $\tilde k_{\mathcal R_{II}}(a,0) = \tilde k_{\mathcal R_{II}} (f(\sigma), 0)= \rho(\sigma, 0).$ Thus $\sigma = \lambda_1$. 

Therefore $f_{11}(\lambda_1)=\lambda_1$ and $f_{11}(0)=0$ so $f_{11}(\lambda)=\lambda$, $\lambda\in \DD$ by the classical Schwarz lemma.

Since $f(\lambda)\in \overline{\mathcal R_{II}}$ for any $\lambda\in \bar\DD$ we find that $|f_{11}|^2 + |f_{12}|^2\leq 1$ on $\bar \DD$. Clearly $|f_{11}^*|=1$ on $\TT$, so $f_{12}\equiv 0$. In the same way we infer that $f_{21}\equiv 0$. This finishes the proof.

\end{proof}

\end{document}